\newcommand{\R}{\mathbb{R}}
\newcommand{\N}{\mathbb{N}}
\newcommand{\Z}{\mathbb{Z}}
\newcommand{\E}{\mathbf{E}}
\newcommand{\p}{\mathbf{P}}
\theoremstyle{plain}
\newtheorem{lemma}{Lemma}
\newtheorem{theorem}{Theorem}
\newtheorem*{theorem*}{Theorem}
\newtheorem{proposition}{Proposition}
\theoremstyle{remark}
\newtheorem{remark}{Remark}
\newtheorem*{example}{Example}
\title{Implicit renewal theory in the arithmetic case}
\author{P\'eter Kevei
\\
Center for Mathematical Sciences, Technische Universit\"at M\"unchen \\
Boltzmannstra{\ss }e 3, 85748 Garching, Germany \\
\texttt{peter.kevei@tum.de}}
\date{}
\begin{document}

\maketitle

\begin{abstract}
We extend Goldie's implicit renewal theorem to the arithmetic case, which allows us to 
determine the tail behavior of the solution of various random fixed point equations. It 
turns out that the arithmetic and nonarithmetic cases are very different. Under 
appropriate conditions we obtain that the tail of the solution $X$ of the fixed 
point equations $X \stackrel{\mathcal{D}}{=} AX + B$, $X \stackrel{\mathcal{D}}{=} AX 
\vee B$  is $\ell (x) q(x) x^{-\kappa}$, where $q$ is a logarithmically periodic function 
$q(x e^h) = q(x)$, $x > 0$, with $h$ being the span of the arithmetic distribution 
of $\log A$, and $\ell$ is a slowly varying function. In particular, the tail is not 
necessarily regularly varying.
We use the renewal theoretic approach developed by Grincevi\v{c}ius and Goldie.

\noindent \textit{Keywords:} Perpetuity equation; maximum of perturbed random 
walk; implicit renewal theorem; arithmetic distribution; iterated function system. \\
\noindent \textit{MSC2010:} 60H25, 60K05.
\end{abstract}

\section{Introduction}

Consider the perpetuity equation
\begin{equation} \label{eq:perpetuity}
X \stackrel{\mathcal{D}}{=} A X + B,
\end{equation}
where $(A,B)$ and $X$ on the right-hand side are independent. The tail behavior of the 
solution has attracted much attention since Kesten's result \cite{Kesten}. This result 
was rediscovered by Grincevi\v{c}ius \cite{Grinc}, whose renewal theoretic method was 
developed further and applied to more general random fixed point equations by Goldie 
\cite{Goldie}. They proved the following.

\begin{theorem*}\emph{(Kesten--Grincevi\v{c}ius--Goldie)}
Assume that $A \geq 0$ a.s., $\E A^\kappa = 1$ for some $\kappa > 0$, $\E A^\kappa \log_+ 
A < \infty$, $\E |B|^\kappa < \infty$ and the distribution of $\log A$ conditioned on $A 
\neq 0$ is nonarithmetic. Then
\[
\lim_{x \to \infty} x^\kappa \, \p \{ X > x \} = c_+, \quad 
\lim_{x \to \infty} x^\kappa \, \p \{ X < -x \} = c_-.
\]
Furthermore, if $\p \{ Ax + B = x \} < 1$ for all $x \in \R$, then $c_+ + c_- > 0$.
\end{theorem*}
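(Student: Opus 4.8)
The plan is to reduce the statement to the key renewal theorem via an exponential change of measure, following the method of Grincevi\v{c}ius and Goldie. Put $f(t)=e^{\kappa t}\,\p\{X>e^t\}$ for $t\in\R$. Conditioning on $A$ and using $A\ge 0$ gives $\p\{AX>e^t\}=\int_\R\p\{X>e^{t-s}\}\,\mu(ds)$, where $\mu$ is the law of $\log A$ restricted to $\{A\ne 0\}$; multiplying by $e^{\kappa t}$ this becomes $e^{\kappa t}\p\{AX>e^t\}=(f*\nu)(t)$ with $\nu(ds)=e^{\kappa s}\mu(ds)$, a \emph{probability} measure since $\nu(\R)=\E A^{\kappa}=1$. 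Since $X\stackrel{\mathcal{D}}{=}AX+B$, splitting $\p\{X>e^t\}=\p\{AX+B>e^t\}$ as $\p\{AX>e^t\}+\bigl(\p\{AX+B>e^t\}-\p\{AX>e^t\}\bigr)$ yields the renewal equation
\[
f=f*\nu+g,\qquad g(t)=e^{\kappa t}\bigl(\p\{AX+B>e^t\}-\p\{AX>e^t\}\bigr).
\]

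Next I would record the properties of $\nu$ and verify the regularity needed for the renewal theorem. The measure $\nu$ is nonarithmetic, since tilting does not change the (nonarithmetic, hence nondegenerate) support of $\mu$; its mean $m=\int_\R s\,\nu(ds)=\E[A^{\kappa}\log A]$ is finite by $\E A^{\kappa}\log_+A<\infty$ and strictly positive, because $t\mapsto\E A^{t}$ is convex and does not exceed $1$ on $[0,\kappa]$ (being $\le 1$ at $0$ and $=1$ at $\kappa$), so its left derivative $m$ at $\kappa$ is $>0$ (here non-arithmeticity of $\log A$ rules out the degenerate case, where $m$ could vanish). The crux is to show that $g$ is directly Riemann integrable, equivalently that $\int_0^\infty\bigl|\p\{X>x\}-\p\{AX>x\}\bigr|\,x^{\kappa-1}\,dx<\infty$. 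Using $\p\{X>x\}=\p\{AX+B>x\}$, the integrand is dominated by $\p\{\,|AX-x|\le|B|\,\}$; integrating against $x^{\kappa-1}$ and applying Fubini bounds the integral by a constant multiple of $\E\bigl[((AX+|B|)^+)^{\kappa}-((AX-|B|)^+)^{\kappa}\bigr]$, which the mean value inequality controls (up to constants) by $\E|B|^{\kappa}$ when $\kappa\le 1$ and by $\E\bigl[|B|(|AX|+|B|)^{\kappa-1}\bigr]$ when $\kappa>1$. For $\kappa\le 1$ the hypothesis $\E|B|^{\kappa}<\infty$ suffices; for $\kappa>1$ one also uses the a priori bound $\E|X|^{s}<\infty$ for all $0<s<\kappa$ — which follows from $\E A^{s}<1$ (convexity again) via the a.s.\ convergent series representation of $X$ and Minkowski's inequality — together with H\"older's inequality and $\E A^{\kappa}=1$. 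Finally $f$ is bounded: once $g$ is bounded one has $f*\nu^{*n}(t)\to 0$ pointwise (because $\nu^{*n}$ is the law of a random walk with positive drift and $f(u)\to 0$ as $u\to-\infty$), so the renewal equation is solved by $f=g*U$, $U=\sum_{n\ge 0}\nu^{*n}$ being the locally bounded renewal measure of $\nu$.

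With $g$ directly Riemann integrable and $\nu$ nonarithmetic with finite positive mean $m$, the key renewal theorem yields $f(t)\to\frac1m\int_\R g(s)\,ds$, that is,
\[
x^{\kappa}\,\p\{X>x\}\ \longrightarrow\ c_+:=\frac{1}{\E[A^{\kappa}\log A]}\int_0^\infty\bigl(\p\{X>x\}-\p\{AX>x\}\bigr)\,x^{\kappa-1}\,dx .
\]
Applying the same reasoning to $-X$, which satisfies $-X\stackrel{\mathcal{D}}{=}A(-X)+(-B)$ and inherits every hypothesis, gives $x^{\kappa}\,\p\{X<-x\}\to c_-$ with the analogous formula.

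For the last assertion, suppose $c_++c_-=0$. Adding the two limiting formulas and using $\p\{|AX+B|>x\}=\p\{|X|>x\}$, one checks (by the same estimates) that $\int_0^\infty x^{\kappa-1}\bigl(\p\{|AX+B|>x\}-\p\{|AX|>x\}\bigr)\,dx$ is absolutely convergent and equals $\frac1\kappa\E\bigl[|AX+B|^{\kappa}-|AX|^{\kappa}\bigr]$ with the bracketed random variable integrable, whence $\E\bigl[|AX+B|^{\kappa}-|AX|^{\kappa}\bigr]=\kappa m(c_++c_-)=0$. Exploiting the independence of $X$ and $(A,B)$, the identity in law $|AX+B|\stackrel{\mathcal{D}}{=}|X|$, and the series representation $X=\sum_{n\ge1}A_1\cdots A_{n-1}B_n$ — along which $\E(A_1\cdots A_{n-1})^{\kappa}=1$ for every $n$, so that a nondegenerate $B$ would force $\E|X|^{\kappa}=\infty$ at a rate incompatible with the cancellation just obtained — a rigidity argument in the spirit of Kesten and Grincevi\v{c}ius forces $AX+B=X$ a.s., hence $X$ to be a.s.\ a constant $x_0$ with $\p\{Ax_0+B=x_0\}=1$, contradicting the hypothesis. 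I expect this positivity step to be the subtlest part, with the direct-Riemann-integrability estimates in the case $\kappa>1$ forming the main technical burden.
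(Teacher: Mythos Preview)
The paper does not prove this theorem; it is quoted in the introduction as a known result of Kesten, Grincevi\v{c}ius and Goldie, and the paper's own contribution is the arithmetic analogue (its Theorem~\ref{th:imp} et seq.). Your outline is exactly the Grincevi\v{c}ius--Goldie renewal method, and it is the same method the paper adapts in its proof of Theorem~\ref{th:imp}, so in spirit you are on the right track.

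There is, however, a genuine technical gap. You assert that direct Riemann integrability of $g$ is ``equivalent'' to
\[
\int_0^\infty \bigl|\p\{X>x\}-\p\{AX>x\}\bigr|\,x^{\kappa-1}\,dx<\infty .
\]
This is false: an $L^1$ function need not be directly Riemann integrable, and $g(t)=e^{\kappa t}\bigl(\p\{AX+B>e^t\}-\p\{AX>e^t\}\bigr)$ is in general neither continuous nor of bounded variation, so the key renewal theorem cannot be applied to it directly. Goldie's remedy---which the paper reproduces verbatim in its proof of Theorem~\ref{th:imp}---is to first apply the exponential smoothing $\widehat g(s)=\int_{-\infty}^s e^{-(s-x)}g(x)\,dx$ to both sides of the renewal equation; the smoothed $\widehat g$ is continuous, and its direct Riemann integrability \emph{does} follow from the $L^1$ bound you correctly establish. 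One then recovers the limit for $f$ from that for $\widehat f$ by an ``unsmoothing'' step. Without this device your argument does not close.

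Your positivity sketch is also not the argument that actually works. The step ``a rigidity argument \ldots\ forces $AX+B=X$ a.s.'' does not follow from $\E\bigl[|AX+B|^\kappa-|AX|^\kappa\bigr]=0$ and the identity in law $|AX+B|\stackrel{\mathcal D}{=}|X|$; equality in distribution does not yield almost-sure equality, and the heuristic about $\E|X|^\kappa=\infty$ ``at a rate incompatible with the cancellation'' is not a proof. Goldie's route (his p.~157, invoked by the paper in the proof of Theorem~\ref{th:grinc}) is different: one bounds $|X|$ from below by the supremum of the associated perturbed random walk $\sup_{n\ge 1}A_1\cdots A_{n-1}B_n$, whose tail satisfies the maximum equation $Y\stackrel{\mathcal D}{=}AY\vee B$, and for \emph{that} equation the function $\psi$ is nonnegative, so positivity of the limiting constant is immediate from the renewal representation. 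This comparison-to-maximum argument is what you should aim for, not a rigidity argument.
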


Besides perpetuity equation (\ref{eq:perpetuity}) the best known and most 
investigated random fixed point equation is the maximum equation
\begin{equation} \label{eq:maxeq}
X \stackrel{\mathcal{D}}{=} AX \vee B,
\end{equation}
where $a \vee b = \max \{ a,b \}$, $A \geq 0$ and $(A,B)$ and $X$ on the right-hand 
side are independent. This equation appears in the analysis of the maximum 
of a perturbed random walk. Under the same assumptions Goldie proved the same tail 
behavior of the solution. For theory, applications and history of perpetuity equation 
(\ref{eq:perpetuity}) we refer to Buraczewski, Damek and Mikosch \cite{BDM} and for 
perturbed random walks and maximum equation (\ref{eq:maxeq}) to Iksanov \cite{Iksanov}.

Interestingly enough, the case when $\log A$ is arithmetic was only treated by 
Grincevi\v{c}ius in the perpetuity case and by Iksanov \cite{Iksanov} in the maximum 
case, see their theorems below. In both cases the tail has a completely different 
behavior than in the nonarithmetic case. In particular, the tail is not 
regularly varying. Investigating the maximum of random walks the maximum equation 
(\ref{eq:maxeq}) appears with $B \equiv 1$. In this case the tail behavior was 
analyzed by Asmussen \cite[XIII. Remark 5.4]{Asmussen} and by Korshunov 
\cite{Kor}.

The aim of the present paper is to extend Goldie's implicit renewal theorem to the 
arithmetic case, providing a unified approach for random fixed point equations. This is 
done in Subsection \ref{subsect:finite}. As an example we prove that
the St.~Petersburg distribution is a solution of an appropriate perpetuity equation, 
showing that the tail of a solution can be irregular. We also show that the set of 
possible functions appearing in the tail of the solution is large. In Subsection 
\ref{subsect:infinite} we treat the case when 
the condition $\E A^\kappa \log_+ A < \infty$ does not hold, while Subsection 
\ref{subsect:<1} deals with the case $\E A^\kappa < 1$, but $\E A^t = \infty$, for $t > 
\kappa$. The corresponding nonarithmetic versions were treated by Kevei \cite{Kevei}. In 
each case we give the general implicit renewal theorem and then specialize it to the two 
equations (\ref{eq:perpetuity}) and (\ref{eq:maxeq}). Finally, in Subsection 
\ref{subsect:sandwich} using Alsmeyer's sandwich technique \cite{Alsmeyer} we show how 
these results apply to iterated function systems. All the proofs are contained in Section 
\ref{sect:proofs}.

\section{Results and discussion}

A random variable $Y$, or its distribution, is called \emph{arithmetic} 
(also called centered arithmetic, or centered lattice) if
$Y \in h \Z = \{ 0, \pm h, \pm 2h, \ldots \}$ a.s.~for some $h > 0$. The 
largest such an $h$ is the span of $Y$. We stress the difference between arithmetic and 
lattice distributions, where the latter means $Y \in a + h \Z$ a.s.~for some $a, h$.

Assume that $\E A^\kappa = 1$ for some $\kappa > 0$, which is the so-called Cram\'er 
condition (for $\log A$). Due to the multiplicative structure in 
(\ref{eq:perpetuity}) and (\ref{eq:maxeq}), the key idea, which goes back to 
Grincevi\v{c}ius, is to introduce a new probability measure 
\begin{equation} \label{eq:def-pkappa}
\p_\kappa \{  \log A \in C \} = \E [ I(\log A \in C) A^\kappa],
\end{equation}
where $C$ is a Borel set of $\R$, and $I(B)$ is the indicator function of the event 
$B$, i.e.~it is 1 if $B$ holds, and 0 otherwise. Under the new measure the distribution 
function (df) of $\log A$ is
\begin{equation} \label{eq:Fkappa}
F_\kappa(x) = \p_\kappa \{ \log A \leq x \} =
\int_{-\infty}^x e^{\kappa y} F(\mathrm{d} y),
\end{equation}
where $F(x) = \p \{ \log A \leq x \}$. We use the convention 
$\int_a^b = \int_{(a,b]}$ for $-\infty < a < b < \infty$.
Note that without any further assumption on the distribution of $A$ we have
\begin{equation} \label{eq:Fknegtail}
F_\kappa(-x) \leq e^{-\kappa x} \quad \text{for } x > 0. 
\end{equation}
Under the new measure equations 
(\ref{eq:perpetuity}), (\ref{eq:maxeq}) can be rewritten as renewal equations, where the 
renewal function is 
\begin{equation} \label{eq:def-U}
U(x) = \sum_{n=0}^\infty F_\kappa^{*n}(x), 
\end{equation}
${*n}$ standing for the usual $n$-fold convolution.
Then the tail asymptotics can be obtained via the key renewal theorem in the arithmetic 
case on the whole line (note that $\log A$ can be negative). If $\E_\kappa \log A < 
\infty$, to which we refer as the `finite mean case', the required key renewal theorem is 
given in \cite[Proposition 6.2.6]{Iksanov}. In the `infinite mean case', when $\E_\kappa 
\log A = \infty$, but $F_\kappa$ has regularly varying tail we prove an infinite mean key 
renewal theorem in the arithmetic case in Lemma \ref{lemma:keyren}, which is an extension 
of Erickson's result \cite[Theorem 3]{Erickson}. Finally, when Cram\'er's condition does 
not hold, i.e.~$\E A^\kappa = \theta \in (0,1)$, $\E A^t = \infty$, $t > \kappa$, one 
ends up with a defective renewal equation, for which a key renewal theorem is given in 
Lemma \ref{lemma:keyren2}.

\subsection{Finite mean case} \label{subsect:finite}

Our assumptions on $A$ are the following:
\begin{equation} \label{eq:A-ass1}
\begin{gathered}
A \geq 0, \ \E A^\kappa = 1 \text{ for some $\kappa > 0$},
\  \E A^\kappa \log_+ A < \infty, \\
\text{and  $\log A$ conditioned on $A \neq 0$ is arithmetic with span $h$}.
\end{gathered}
\end{equation}

Note that the convexity of the function 
$\E A^s$, $s \in [0,\kappa]$ and  $\E A^0 = \E A^\kappa = 1$ together implies
$\E_\kappa \log A = \E A^\kappa \log A =: \mu > 0$. Moreover, 
(\ref{eq:Fknegtail}) implies that $\E_\kappa [(\log A)_{-}]^2 < \infty$. Therefore the 
renewal function $U$ in (\ref{eq:def-U}) is well-defined, see Theorem 2.1 by Kesten and 
Maller \cite{KestenMaller}.

For a real function $f$ the set of its continuity points is denoted by $C_f$. 
Introduce the notation
\[
\begin{split}
\mathcal{Q} = \Big\{  & q: (0,\infty) \to [0,\infty) \, : \,
x^{-\kappa} q(x) \text{ is nonincreasing for some } \kappa > 0, \\
&\ q(x e^h) = q(x), \ \forall x >0, \text{ for some } h > 0 \Big\}.
\end{split}
\]
In all the statements below a function $q \in \mathcal{Q}$ appears in the tail 
asymptotics. Note that $q \in \mathcal{Q}$ is either strictly positive or identically 0. 
The following result is a counterpart of Goldie's implicit renewal theorem \cite[Theorem 
2.3]{Goldie} in 
the arithmetic case.

\begin{theorem} \label{th:imp}
Assume (\ref{eq:A-ass1}) and for a random variable $X$
\begin{equation} \label{eq:int-cond}
\int_0^\infty y^{\kappa -1 } | \p \{ X > y \} - \p \{ AX > y \} | \mathrm{d} y < \infty, 
\end{equation}
where $A$ and $X$ are independent.
Then there exists a function $q \in \mathcal{Q}$ such that for $x \in C_q$
\begin{equation} \label{eq:imp-lim}
\lim_{n \to \infty} x^\kappa e^{\kappa nh}  \p \{ X > x e^{ nh} \} = q(x).
\end{equation}
Moreover, if
\begin{equation} \label{eq:sum-cond}
\sum_{j \in \Z} e^{\kappa(x + j h)} \big| \p \{ X > e^{x + j h} \}
- \p \{ AX > e^{x + j h} \}  \big| < \infty, \quad \text{for each } x \in \R,
\end{equation}
then (\ref{eq:imp-lim}) holds for all $x > 0$.
\end{theorem}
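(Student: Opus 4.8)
The plan is to follow the Grincevičius--Goldie renewal-theoretic scheme, but in the arithmetic setting, so that the key renewal theorem produces a logarithmically periodic limit rather than a constant. First I would change variables exponentially: set $r(x) = e^{\kappa x}\p\{X > e^x\}$ and $\delta(x) = e^{\kappa x}\big(\p\{X>e^x\} - \p\{AX>e^x\}\big)$. Using the independence of $A$ and $X$ together with the definition \eqref{eq:def-pkappa} of $\p_\kappa$, one computes that $e^{\kappa x}\p\{AX>e^x\} = \E_\kappa\big[ r(x-\log A)\big]$, so that $r$ satisfies the renewal-type equation
\[
r(x) = \delta(x) + \int_{\R} r(x - y)\, F_\kappa(\mathrm{d} y).
\]
Because $\log A$ (conditioned on $A\neq 0$) is arithmetic with span $h$, the measure $F_\kappa$ is supported on $h\Z$ (its possible atom at $-\infty$ coming from $\{A=0\}$ contributes nothing on the right), so it suffices to analyze $r$ along each coset $x_0 + h\Z$ separately; on such a coset the equation becomes a genuine discrete renewal equation driven by the arithmetic probability measure with mean $\mu = \E_\kappa \log A \in(0,\infty)$ guaranteed by \eqref{eq:A-ass1}. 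Iterating gives $r = \delta * U$ with $U$ as in \eqref{eq:def-U}.

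Next I would verify the directly-Riemann-integrability-type hypothesis needed to apply the arithmetic key renewal theorem on the whole line \cite[Proposition 6.2.6]{Iksanov}. The point is that condition \eqref{eq:int-cond}, after the exponential change of variables, says exactly that $\int_\R |\delta(x)|\,\mathrm{d} x < \infty$; combined with the fact that $r$ (hence, via the renewal equation, a suitable majorant of $\delta$) is bounded and that $x^{-\kappa}\p\{X>x\}$-type quantities are eventually monotone, one gets summability of $\delta$ along each arithmetic coset, which is the discrete analogue of direct Riemann integrability. Applying the key renewal theorem then yields, for each fixed coset $x_0 + h\Z$,
\[
\lim_{n\to\infty} r(x_0 + nh) = \frac{h}{\mu}\sum_{j\in\Z}\delta(x_0 + jh) =: g(x_0),
\]
which in the original variable is precisely \eqref{eq:imp-lim} with $q(x) = g(\log x)$, a function satisfying $q(xe^h) = q(x)$. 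The monotonicity built into $\mathcal Q$ (that $x^{-\kappa}q(x)$ is nonincreasing) is inherited from the monotonicity of $\p\{X>x\}$: on the set of $x$ for which the limit is attained along the full sequence, $x^{-\kappa}q(x)$ is a pointwise decreasing limit of the nonincreasing functions $x^{-\kappa}\p\{X>x\}$ along $x e^{nh}$. Finally, $q$ is either $\equiv 0$ or strictly positive because a log-periodic function with $x^{-\kappa}q(x)$ nonincreasing that vanishes somewhere must vanish identically. This gives the first assertion, with convergence a priori only at continuity points $x\in C_q$, since the discrete key renewal theorem controls each coset but the limit function $g$ need not be continuous in $x_0$.

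For the last statement, the stronger hypothesis \eqref{eq:sum-cond} asserts that $\sum_{j\in\Z}|\delta(x+jh)| < \infty$ for \emph{every} $x\in\R$, i.e.\ $\delta$ is summable along every coset, not just almost every one. I would show this upgrades convergence in \eqref{eq:imp-lim} from $x\in C_q$ to all $x>0$ by noting that under \eqref{eq:sum-cond} the coset-sum $g(x_0) = \frac{h}{\mu}\sum_j\delta(x_0+jh)$ is finite for every $x_0$ and the key renewal theorem applies verbatim on each coset, so the limit exists for every $x$ with value $q(x) = g(\log x)$; the possible discontinuities of $q$ are then irrelevant because we are asserting convergence of the specific sequence $x^\kappa e^{\kappa nh}\p\{X>xe^{nh}\}$ for each fixed $x$, which is exactly what the per-coset renewal theorem delivers. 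The main obstacle I anticipate is the careful justification of the direct-Riemann-integrability / coset-summability condition for the key renewal theorem: one must produce, from the finite-variation hypothesis \eqref{eq:int-cond} alone, a nonnegative nonincreasing majorant of $|\delta|$ that is integrable — this is where the eventual monotonicity of $\p\{X>x\}$ and the bound \eqref{eq:Fknegtail} on the left tail of $F_\kappa$ (ensuring $U$ does not blow up) are used, and it is the step that genuinely differs from the nonarithmetic treatment in \cite{Goldie} and must be adapted to the lattice setting following \cite{Iksanov}.
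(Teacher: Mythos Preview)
Your treatment of the second assertion (under \eqref{eq:sum-cond}) is correct and coincides with the paper's: apply the arithmetic key renewal theorem directly to the unsmoothed equation $r=\delta+r*F_\kappa$, coset by coset, and read off $q(x)=\frac{h}{\mu}\sum_{j\in\Z}\delta(\log x+jh)$.

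For the first assertion, however, there is a genuine gap. You want to pass from the integrability hypothesis \eqref{eq:int-cond}, i.e.\ $\int_\R|\delta|<\infty$, to the coset-summability $\sum_{j\in\Z}|\delta(x+jh)|<\infty$ for every $x$ that the discrete key renewal theorem requires. Integrability alone only yields this for \emph{almost every} $x$, and your proposed remedy---an integrable nonincreasing majorant of $|\delta|$ obtained from the monotonicity of $\p\{X>\cdot\}$---does not exist in general: $\delta$ is a \emph{difference} of two monotone tails and can have non-integrable spikes along a fixed coset (atoms of $X$ already produce jumps in $e^{\kappa x}\p\{X>e^x\}$). This is not a lattice peculiarity; it is precisely why Goldie introduced the exponential smoothing $\widehat g(s)=\int_{-\infty}^s e^{-(s-x)}g(x)\,\mathrm{d}x$, and the paper uses the same device here. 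For the smoothed $\widehat\psi$ one checks by Fubini that
\[
\sum_{j\in\Z}\bigl|\widehat\psi(x+jh)\bigr|\ \le\ \frac{1}{1-e^{-h}}\int_\R|\psi(y)|\,\mathrm{d}y\ <\ \infty\quad\text{for every }x,
\]
so the key renewal theorem applies to the \emph{smoothed} equation and gives $\widehat f(s+nh)\to C(s)$ for all $s$. The restriction to $x\in C_q$ then arises not from a failure of the renewal theorem on exceptional cosets, as you suggest, but from the \emph{unsmoothing} step: convergence of $\widehat f$ translates into convergence of the integrals $\int_{e^{s_1}}^{e^{s_2}}(ye^{nh})^\kappa\p\{X>ye^{nh}\}\,\mathrm{d}y$, and a Helly-type subsequence argument exploiting the monotonicity of $y\mapsto\p\{X>y\}$ then identifies $q$ uniquely only at its continuity points. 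In short, the missing ingredient is the smoothing/unsmoothing mechanism; without it the first part of your argument does not go through.
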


Whenever $q$ is continuous the exact tail asymptotic can be determined. In Proposition 
\ref{prop:Qset} below, we show that $q$ indeed can be continuous.

\begin{lemma} \label{lemma:cont-q}
If the function $q \in \mathcal{Q}$ in (\ref{eq:imp-lim}) is nonzero and continuous, then
\begin{equation} \label{eq:cont-q}
\p \{ X > x \} \sim \frac{q(x)}{x^{\kappa}} \quad \text{as } \, x \to \infty. 
\end{equation}
\end{lemma}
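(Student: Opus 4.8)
The plan is to upgrade the sequential statement (\ref{eq:imp-lim}) to a genuine limit as the continuous variable tends to infinity, using only the monotonicity of the tail $y\mapsto\p\{X>y\}$. First I would record the consequences of $q\in\mathcal Q$ being continuous and nonzero: as noted after the definition of $\mathcal Q$, such a $q$ is then strictly positive, and since $q(xe^h)=q(x)$, the function $s\mapsto q(e^s)$ is continuous and $h$-periodic on $\R$, hence uniformly continuous, bounded, with $\inf_{s\in\R}q(e^s)>0$. In particular $C_q=(0,\infty)$, so that (\ref{eq:imp-lim}) holds for \emph{every} $x>0$ along the sequence $xe^{nh}$, $n\to\infty$.

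Next, for $x>0$ write $\log x=n(x)h+t(x)$ with $n(x)\in\Z$ and $t(x)\in[0,h)$; then $n(x)\to\infty$ as $x\to\infty$ and, by periodicity, $q(x)=q(e^{t(x)})$. Fix $\varepsilon>0$. Using uniform continuity and positivity of $s\mapsto q(e^s)$, I would pick a partition $0=s_0<s_1<\dots<s_m=h$ so fine that for every $i$ one has $e^{\kappa(s_i-s_{i-1})}\le 1+\varepsilon$ and $q(e^u)/q(e^v)\le 1+\varepsilon$ whenever $u,v\in[s_{i-1},s_i]$. If $x$ has phase $t(x)\in[s_{i-1},s_i)$, then $e^{n(x)h+s_{i-1}}\le x<e^{n(x)h+s_i}$, and monotonicity of the tail gives
\[
\p\{X>e^{n(x)h+s_i}\}\le \p\{X>x\}\le \p\{X>e^{n(x)h+s_{i-1}}\}.
\]
Multiplying through by $x^\kappa=e^{\kappa(n(x)h+t(x))}$ and factoring out $e^{\kappa(t(x)-s_{i-1})}$, respectively $e^{\kappa(t(x)-s_i)}$, the central quantity $x^\kappa\p\{X>x\}$ is squeezed between $e^{\kappa(t(x)-s_i)}\,e^{\kappa(n(x)h+s_i)}\p\{X>e^{n(x)h+s_i}\}$ and $e^{\kappa(t(x)-s_{i-1})}\,e^{\kappa(n(x)h+s_{i-1})}\p\{X>e^{n(x)h+s_{i-1}}\}$. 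Letting $x\to\infty$ with phase in $[s_{i-1},s_i)$, so $n(x)\to\infty$, and applying (\ref{eq:imp-lim}) at the fixed points $e^{s_i}$ and $e^{s_{i-1}}$, together with $e^{\kappa(t(x)-s_i)}\ge(1+\varepsilon)^{-1}$ and $e^{\kappa(t(x)-s_{i-1})}\le 1+\varepsilon$, yields $\liminf x^\kappa\p\{X>x\}\ge(1+\varepsilon)^{-1}q(e^{s_i})$ and $\limsup x^\kappa\p\{X>x\}\le(1+\varepsilon)\,q(e^{s_{i-1}})$. Dividing by $q(x)=q(e^{t(x)})$ and using the partition bound ($q(e^{s_i})$, $q(e^{s_{i-1}})$ and $q(e^{t(x)})$ differ by a factor at most $1+\varepsilon$) gives $(1+\varepsilon)^{-2}\le\liminf x^\kappa\p\{X>x\}/q(x)$ and $\limsup x^\kappa\p\{X>x\}/q(x)\le(1+\varepsilon)^2$, the $\liminf/\limsup$ taken over $x\to\infty$ with phase in $[s_{i-1},s_i)$.

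Since there are only finitely many phase intervals $[s_{i-1},s_i)$, these bounds hold for all sufficiently large $x$ irrespective of its phase, hence $(1+\varepsilon)^{-2}\le\liminf_{x\to\infty}x^\kappa\p\{X>x\}/q(x)\le\limsup_{x\to\infty}x^\kappa\p\{X>x\}/q(x)\le(1+\varepsilon)^2$; letting $\varepsilon\downarrow0$ gives (\ref{eq:cont-q}). The only point genuinely requiring care is this passage from the lattice limit (\ref{eq:imp-lim}) to the continuous one, uniformly over the phase $t(x)$; this is precisely where monotonicity of $\p\{X>\cdot\}$ and the uniform-continuity partition enter, and it is the step I would expect to be the main (though mild) obstacle. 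Note that no property of $\mathcal Q$ beyond continuity, positivity and log-periodicity of $q$ — in particular not the monotonicity of $x^{-\kappa}q(x)$ — is used.
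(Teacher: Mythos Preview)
Your proof is correct and rests on the same core idea as the paper's: sandwich $x^\kappa\p\{X>x\}$ between its values along the lattice $e^{nh+s}$ at two nearby phases $s$, invoke the sequential limit (\ref{eq:imp-lim}) there, and let continuity plus log-periodicity of $q$ close the gap. The organizational difference is that the paper argues by subsequences --- writing $x_n=z_ne^{\ell_nh}$ with $z_n\in[1,e^h)$, extracting via Bolzano--Weierstrass a subsequence with $z_n\to\lambda$, and sandwiching with phases $\lambda\pm\varepsilon$ --- whereas you partition $[0,h)$ using uniform continuity and sandwich with the partition endpoints. Both are standard routes from a lattice limit to a continuous one; the paper's version has the minor bonus that, before continuity is used, it already yields the inequality $q(\lambda+)\le\liminf x_n^\kappa\p\{X>x_n\}\le\limsup x_n^\kappa\p\{X>x_n\}\le q(\lambda-)$ along any sequence whose phase converges to $\lambda$, a statement for general $q\in\mathcal Q$ that your partition argument does not directly produce.
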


Similar behavior appears in the theory of semistable and max-semistable 
laws, and in the theory of smoothing transformaiton. If $\kappa \in (0,2)$ then $q(x) 
x^{-\kappa}$ is exactly the tail of the L\'evy 
measure of a semistable law with $q \in \mathcal{Q}$. For 
$\kappa > 0$ the function $\exp \{ - q(x) x^{-\kappa} \}$, $x > 0$, is a max semistable 
distribution function. For more in this direction we refer to Meerschaert and Scheffler 
\cite{MeerschaertS} and Megyesi \cite{Megyesi, Megyesi2}.

The smoothing transformation is closely related to our setup. Consider the fixed 
point equation
\begin{equation} \label{eq:smoothing}
X \stackrel{\mathcal{D}}{=} A_1 X_1 + \ldots + A_N X_N, 
\end{equation}
where $X_1, \ldots, X_N$ are iid copies of $X$,  $A_1, \ldots, A_N$ are iid, and the 
$A$'s and $X$'s are independent. Durrett and Liggett \cite{DL} gave necessary and 
sufficient conditions for the existence of the solution of (\ref{eq:smoothing}). Assuming 
existence, let $\varphi$ be the Laplace-transform of the solution. 
In \cite[Theorem 2]{DL} it is shown that, under appropriate conditions, 
$1 - \varphi(t) \sim t^\alpha h(t)$ as $t \to 0$ for some $\alpha \in (0,1]$, where $h$ 
is a logarthmically periodic function. It is not clear how the tail behavior can be 
inferred from these results, since to apply Tauberian theorems regular variation is 
necessary. For more results and references see Alsmeyer, Biggins and Meiners \cite{ABM}, 
in particular Corollary 2.3 and Theorem 3.3.

Finally, we mention that functions of the form $f(x) = p(x) e^{\lambda x}$, $\lambda \in 
\R$, where $p$ is a periodic function, are the solutions of certain integrated Cauchy 
functional equations, see Lau and Rao \cite{LauRao}.

\smallskip

Consider the perpetuity equation (\ref{eq:perpetuity}). We present Grincevi\v{c}ius's 
result in the arithmetic case below. The slight improvement is the 
positivity of $q$, which follows from Goldie's argument \cite[p.~157]{Goldie} combined 
with Theorem 1.3.8 \cite{Iksanov} by Iksanov.

\begin{theorem}{\emph{(Grincevi\v{c}ius \cite[Theorem 2]{Grinc})}} \label{th:grinc}
Assume (\ref{eq:A-ass1}) and
$\E |B|^\kappa < \infty$. Let $X$ be the unique solution of (\ref{eq:perpetuity}). Then 
there exist functions $q_1, q_2 \in \mathcal{Q}$ such that 
\begin{equation} \label{eq:grinc-lim}
\begin{split}
& \lim_{n \to \infty} x^\kappa e^{\kappa nh}  \p \{ X > x e^{ nh} \} = q_1(x), 
\quad x \in C_{q_1},\\
& \lim_{n \to \infty} x^\kappa e^{\kappa nh}  \p \{ X < -x e^{ nh} \} = q_2(x), \quad x 
\in C_{q_2}.
\end{split}
\end{equation}
If $\p \{ A x + B = x\} < 1$ for any $x \in \R$, then $q_1(x) + q_2 (x) > 0$.
\end{theorem}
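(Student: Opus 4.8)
The plan is to derive Theorem~\ref{th:grinc} from the general implicit renewal theorem (Theorem~\ref{th:imp}) applied separately to the positive and negative tails of $X$. The main work is to verify the integrability hypothesis~\eqref{eq:int-cond} for the perpetuity equation, and then to upgrade to positivity of $q_1+q_2$.

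\smallskip

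\noindent\textbf{Step 1: Tail comparison from the perpetuity structure.} Since $X \stackrel{\mathcal{D}}{=} AX+B$ with $(A,B)$ independent of $X$, for $y>0$ write
\[
\p\{X>y\} - \p\{AX>y\} = \p\{AX+B>y\} - \p\{AX>y\}.
\]
The standard estimate (Goldie \cite[Lemma~9.4]{Goldie}, or the computation in \cite{Grinc}) bounds $|\p\{AX+B>y\}-\p\{AX>y\}|$ by a sum of terms of the form $\p\{AX>y/2\}$ (contributing because $X$, hence $AX$, already satisfies a rough tail bound of order $y^{-\kappa}$) and $\p\{|B|>y/2\}$-type terms, so that
\[
\int_0^\infty y^{\kappa-1}\bigl|\p\{X>y\}-\p\{AX>y\}\bigr|\,\mathrm{d}y
\;\le\; C\Bigl(\E|B|^\kappa + \E\bigl[|B|^\kappa (\text{something finite})\bigr]\Bigr)<\infty,
\]
using $\E|B|^\kappa<\infty$ together with $\E A^\kappa=1$ and $\E A^\kappa\log_+A<\infty$. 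One first has to know that $X$ itself has a finite $\kappa$-moment-type bound on its tail, i.e. $\limsup_y y^\kappa\p\{|X|>y\}<\infty$; this is Grincevi\v{c}ius's moment estimate and can be quoted. With this in hand, \eqref{eq:int-cond} holds for both $\p\{X>y\}$ and (replacing $X$ by $-X$, $B$ by $-B$) for $\p\{X<-y\}$. Applying Theorem~\ref{th:imp} twice yields functions $q_1,q_2\in\mathcal{Q}$ and the limits in \eqref{eq:grinc-lim}.

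\smallskip

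\noindent\textbf{Step 2: Positivity.} It remains to show $q_1(x)+q_2(x)>0$ under the nondegeneracy condition $\p\{Ax+B=x\}<1$ for all $x\in\R$. Since each $q_i\in\mathcal{Q}$ is either identically $0$ or strictly positive, it suffices to rule out $q_1\equiv q_2\equiv 0$, i.e. to show that $x^\kappa\p\{|X|>x\}$ does not tend to $0$ along geometric subsequences. Here I would follow Goldie's argument \cite[p.~157]{Goldie}: if both limits vanished, then by the renewal-equation representation underlying Theorem~\ref{th:imp} a certain directly-Riemann-integrable (here: summable) function must be identically zero, which forces $X$ to be supported on $\{0\}$ or otherwise degenerate, contradicting $\p\{Ax+B=x\}<1$. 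The extra ingredient needed in the arithmetic case — to make "the summable function is zero $\Rightarrow$ degeneracy" work — is Iksanov's Theorem~1.3.8 \cite{Iksanov}, which characterizes when the perpetuity solution is degenerate; combining it with Goldie's contradiction argument gives $q_1+q_2>0$.

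\smallskip

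\noindent\textbf{Main obstacle.} The routine-but-delicate part is Step~1: the tail bound on $X$ and the splitting of $|\p\{AX+B>y\}-\p\{AX>y\}|$ must be done carefully enough that the resulting bound is integrable against $y^{\kappa-1}$, which is exactly where $\E A^\kappa\log_+A<\infty$ is consumed (it controls a $\int y^{\kappa-1}\p\{AX>y\}$-type term that would otherwise diverge logarithmically). The positivity in Step~2 is conceptually the subtler point, but since we are explicitly allowed to invoke Goldie's argument and Iksanov's degeneracy criterion, it reduces to citing and stitching together known results rather than proving something new.
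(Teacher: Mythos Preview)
Your Step~1 is essentially the route the paper takes implicitly: the existence of $q_1,q_2$ is Grincevi\v{c}ius's original result, which in the present framework amounts to verifying~\eqref{eq:int-cond} and invoking Theorem~\ref{th:imp}. One correction to your ``main obstacle'' paragraph: the condition $\E A^\kappa\log_+A<\infty$ is \emph{not} what makes~\eqref{eq:int-cond} finite. That integral is controlled using only $\E|B|^\kappa<\infty$, $\E A^\kappa=1$, and the moment bound $\E|X|^t<\infty$ for $t<\kappa$ (Goldie's Lemma~9.4). The hypothesis $\E A^\kappa\log_+A<\infty$ is already part of~\eqref{eq:A-ass1} and is consumed inside Theorem~\ref{th:imp} itself, as the finite mean $\mu=\E_\kappa\log A$ required by the key renewal theorem.

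Your Step~2 cites the right two ingredients but misidentifies what each says, and the mechanism you sketch does not close. Goldie's argument on p.~157 is not ``$q_1\equiv q_2\equiv0\Rightarrow$ a summable function vanishes $\Rightarrow X$ degenerate''; for the perpetuity the function $\psi$ in the renewal equation changes sign, so $q\equiv0$ does \emph{not} force $\psi\equiv0$ (compare Remark~\ref{rem:positivity}, which needs $\psi\ge0$). What Goldie actually does is a \emph{comparison}: the tail of $|X|$ dominates, up to constants, the tail of the maximum of the associated multiplicative random walk, so positivity for the perpetuity is reduced to positivity for the maximum. And Iksanov's Theorem~1.3.8 is not a degeneracy criterion for the perpetuity---it is precisely Theorem~\ref{th:max} of this paper, the arithmetic max-equation result, whose last clause delivers $q>0$ for the maximum. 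The paper's proof is therefore: Goldie's comparison reduces positivity of $q_1+q_2$ to positivity of $q$ in Theorem~\ref{th:max}, and Iksanov supplies the latter. Your outlined contradiction route does not work without this comparison step.
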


Grincevi\v{c}ius also showed that (\ref{eq:grinc-lim}) holds for all $x \in \R$ if $B 
\geq 0$ a.s.

The corresponding maximum equation was treated by Iksanov.

\begin{theorem}\emph{(Iksanov \cite[Theorem 1.3.8]{Iksanov})} \label{th:max}
Assume (\ref{eq:A-ass1}) and $\E |B|^\kappa < \infty$. Let $X$ be the unique solution of 
(\ref{eq:maxeq}). Then there exists a function $q \in \mathcal{Q}$ such that for any $x > 
0$
\begin{equation} \label{eq:max-lim}
\lim_{n \to \infty} x^\kappa e^{\kappa nh}  \p \{ X > x e^{nh} \} = q(x).
\end{equation}
If $B \geq 0$ a.s.~and $\p \{ B > 0 \} > 0$ then $q(x) > 0$.
\end{theorem}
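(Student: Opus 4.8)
The plan is to derive Theorem~\ref{th:max} as a direct application of the implicit renewal theorem, Theorem~\ref{th:imp}, exactly as Goldie's argument does in the nonarithmetic case. First I would verify that a unique solution $X$ of (\ref{eq:maxeq}) exists; this is standard under (\ref{eq:A-ass1}) and $\E|B|^\kappa<\infty$ (indeed $\E_\kappa\log A=\mu>0$ gives the required contraction on the logarithmic scale), and one may take $X=\sup_{n\ge 0}(A_1\cdots A_n)B_{n+1}$ — though for the purposes of the excerpt I will simply assume existence and uniqueness as stated. The key step is to check the integrability condition (\ref{eq:int-cond}). For the maximum equation one has the pointwise identity
\begin{equation*}
\p\{AX>y\}-\p\{AX\vee B>y\}=-\p\{AX\vee B>y,\ AX\le y\}=-\p\{B>y,\ AX\le y\},
\end{equation*}
so that, using $X\stackrel{\mathcal D}{=}AX\vee B$,
\begin{equation*}
0\le \p\{X>y\}-\p\{AX>y\}\le \p\{B>y\}.
\end{equation*}
Hence $\int_0^\infty y^{\kappa-1}|\p\{X>y\}-\p\{AX>y\}|\,\mathrm dy\le \int_0^\infty y^{\kappa-1}\p\{B>y\}\,\mathrm dy=\kappa^{-1}\E(B_+^\kappa)\le\kappa^{-1}\E|B|^\kappa<\infty$, which is (\ref{eq:int-cond}). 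Theorem~\ref{th:imp} then yields a function $q\in\mathcal Q$ with (\ref{eq:imp-lim}) holding for $x\in C_q$.

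To upgrade to ``for all $x>0$'' as claimed in (\ref{eq:max-lim}), I would verify the stronger summability hypothesis (\ref{eq:sum-cond}). By the bound just obtained, $e^{\kappa(x+jh)}|\p\{X>e^{x+jh}\}-\p\{AX>e^{x+jh}\}|\le e^{\kappa(x+jh)}\p\{B>e^{x+jh}\}$, and $\sum_{j\in\Z}e^{\kappa(x+jh)}\p\{B>e^{x+jh}\}<\infty$ for every fixed $x\in\R$: the sum over $j\le 0$ is dominated by a geometric series $\sum_{j\le 0}e^{\kappa(x+jh)}$, while the sum over $j\ge 1$ is, up to the constant factor $e^{\kappa h}/(e^{\kappa h}-1)$, comparable to $\int_{e^x}^\infty y^{\kappa-1}\p\{B>y\}\,\mathrm dy<\infty$ by a standard comparison of the series with the integral (the summand $e^{\kappa jh}\p\{B>e^{x+jh}\}$ is, after summing the geometric weights on each dyadic-type block $(e^{x+(j-1)h},e^{x+jh}]$, bounded by the integral of $y^{\kappa-1}$ against $\p\{B>y\}$ over that block). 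Thus (\ref{eq:sum-cond}) holds and (\ref{eq:max-lim}) is valid for all $x>0$.

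It remains to prove positivity of $q$ when $B\ge 0$ a.s.\ and $\p\{B>0\}>0$. Here I would argue as follows: since $X=\sup_{n\ge0}(A_1\cdots A_n)B_{n+1}\ge B_1$ stochastically, we have $\p\{X>e^{x+jh}\}\ge\p\{B>e^{x+jh}\}$, so $x^\kappa e^{\kappa nh}\p\{X>xe^{nh}\}\ge x^\kappa e^{\kappa nh}\p\{B>xe^{nh}\}$; if the right-hand side does not tend to $0$ along some $x$ we are done, but in general $\p\{B>y\}$ may decay faster than $y^{-\kappa}$, so this crude bound is insufficient and one must instead exploit the multiplicative renewal structure. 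The cleaner route, which I expect to be the main obstacle and which the paper attributes to combining Goldie's argument with Iksanov's Theorem~1.3.8, is to observe that $q$ is the limit of a renewal-type expression of the form $q(x)=\lim_n e^{\kappa nh}\,\E_\kappa[\,\cdot\,]$ built from the directly Riemann integrable (here: summable) function $g(x):=e^{\kappa x}\p\{X>e^x\}-e^{\kappa x}\p\{AX>e^x\}=e^{\kappa x}\p\{B>e^x,\ AX\le e^x\}\ge0$; by the arithmetic key renewal theorem, $q$ restricted to the lattice $x+h\Z$ equals $(1/\mu)\sum_{k\in\Z}g(x+kh)$ up to the $e^{-\kappa\cdot}$ normalization, and this is strictly positive unless $g\equiv 0$ on that lattice. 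Finally $g\equiv 0$ would force $\p\{B>e^x,\ AX\le e^x\}=0$ for a.e.\ $x$, i.e.\ $B\le AX$ a.s., which combined with $X\stackrel{\mathcal D}{=}AX\vee B$ forces $X\stackrel{\mathcal D}{=}AX$; iterating and using $\E_\kappa\log A=\mu>0$ (so $A_1\cdots A_n\to 0$) gives $X=0$ a.s., contradicting $\p\{B>0\}>0$ since $X\ge B_1$. Hence $q(x)>0$ for every $x>0$, completing the proof.
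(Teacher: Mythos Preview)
Your approach is correct and matches the paper's route: establish the pointwise bound $0\le\p\{X>y\}-\p\{AX>y\}=\p\{B>y,\,AX\le y\}\le\p\{B>y\}$, use it to verify the stronger summability condition (\ref{eq:sum-cond}), apply Theorem~\ref{th:imp} to obtain (\ref{eq:max-lim}) for every $x>0$, and deduce positivity from formula (\ref{eq:q-form}) together with the nonnegativity of $\psi$ (this is precisely Remark~\ref{rem:positivity}).

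One slip to correct in your final paragraph: you write ``$\E_\kappa\log A=\mu>0$ (so $A_1\cdots A_n\to 0$)'', but under $\p_\kappa$ a positive mean sends the product to $+\infty$, not $0$. The statement you actually need is $\E\log A<0$ under the \emph{original} measure $\p$, which follows from the convexity of $s\mapsto\E A^s$ on $[0,\kappa]$ with $\E A^0=\E A^\kappa=1$ (and $A\not\equiv 1$). With that fix, the iteration $X\stackrel{\mathcal D}{=}A_1\cdots A_n X_n\to 0$ goes through and yields the desired contradiction to $\p\{B>0\}>0$.
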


In fact this theorem is stated under the additional condition $B > 0$ a.s. In 
the context of \cite{Iksanov} this condition automatically holds since $B = e^\eta$ for 
some random variable $\eta$.

Note the difference between the two theorems. In case of equation (\ref{eq:maxeq}) it is 
possible to show that the stronger condition 
(\ref{eq:sum-cond}) holds (see the proof of \cite[Theorem 1.3.8]{Iksanov}), while in the 
perpetuity case (\ref{eq:perpetuity}) one only has the weaker condition 
(\ref{eq:int-cond}).

The formula for the function $q(x)$ (given in the proof below) in Theorem 
\ref{th:imp} is complicated and implicit, 
since it contains the tail of the solution $X$. Therefore one might think that $q(x) 
\equiv c$ and the tail is simply $c x^{-\kappa}$ as in the nonarithmetic case. We first 
give an explicit example which shows that this is not the case, i.e.~the function $q$ 
can be 
nonconstant.

\begin{example}
The St.~Petersburg game is defined as follows. Peter tosses a fair coin until it lands 
heads and pays $2^k$ ducats to Paul if this happens at the $k$th toss. If $X$ denotes 
Paul's winning then $\p \{ X = 2^k \} = 2^{-k}$, $k = 1,2,\ldots$. The distribution 
function of $X$ is
\begin{equation*} 
\p \{ X \leq x \} =
\begin{cases}
1 - \frac{2^{\{ \log_2 x \}} }{x}, & x \geq 2, \\
0, & x < 2,
\end{cases}
\end{equation*}
where $\lfloor x \rfloor = \max \{ k \in \Z: \, k \leq x \}$ is the usual (lower) integer 
part of $x$,  $\lceil x \rceil = - \lfloor -x \rfloor$ stand for the upper integer part 
and $\{ x \} = x - \lfloor x \rfloor $ is the fractional part.
We note that this 
distribution does not belong to the domain of attraction of any stable law, since the 
function $2^{\{ \log_2 x \}}$ is not slowly varying at infinity. For further properties 
and history of the St.~Petersburg games we refer to \cite{Csorgo} and \cite{BGyK} and the 
references therein.

We show that $X$ is the solution of a perpetuity equation, where the joint distribution 
of $(A,B)$ in (\ref{eq:perpetuity}) is the following:
\begin{equation} \label{eq:StPAB}
\p \{ A = 0, B = 2^k \} = 2^{-2k}, \ k=1,2,\ldots, \quad
\p \{ A= 2^\ell, B= 0 \} = 2^{-(2\ell+1)}, \ \ell=0,1,\ldots. 
\end{equation}
Indeed, assume that $X$ is independent of $(A,B)$. Then for $k \geq 1$
\[
\begin{split}
\p \{ AX + B = 2^k \} & = 
\sum_{\ell=0}^{k-1} \p\{ A= 2^\ell, B= 0 \} \, \p \{ X = 2^{k - \ell} \}
+ \p \{ A= 0, B= 2^k \} \\
& = \sum_{\ell=0}^{k-1} 2^{-(2\ell+1)} \, 2^{-(k - \ell)} +  2^{-2k} \\
& = 2^{-k-1} \, 2 \, (1 - 2^{-k}) + 2^{-2k} = 2^{-k}.
\end{split}
\]
Moreover, $\log A$ conditioned on $A$ being nonzero is arithmetic with span
$h = \log 2$, and
\[
\E A = \sum_{k=0}^\infty \p\{ A = 2^k \} \, 2^k = 1, \ \E A \log_+ A < \infty, \
\E B < \infty.
\]
That is the conditions of Theorem \ref{th:grinc} are satisfied with $\kappa = 1$. In this 
special case we see that $ q(x) = 2^{\{ \log_2 x \} }$.
\end{example}

What simplifies the analysis of the perpetuity equation with $(A,B)$ in 
(\ref{eq:StPAB}) is that $AB = 0$ a.s. It is worth mentioning that whenever $AB=0$, 
$B\geq 0$ a.s.~the solutions of perpetuity equation (\ref{eq:perpetuity}) and maximum 
equation (\ref{eq:maxeq}) take the same form $X=A_1 \ldots A_{N-1} B_N$ for appropriate 
geometrically distributed $N$ (see the proof of Proposition \ref{prop:Qset} for more 
details).  In particular, the St.~Petersburg  distribution is the solution of 
(\ref{eq:maxeq}) with $(A,B)$ in (\ref{eq:StPAB}).
 
Now we generalize this example and show that the set of all possible $q$ functions in 
Theorems \ref{th:grinc} and \ref{th:max} contains the set of right-continuous nonzero 
functions in $\mathcal{Q}$.

\begin{proposition} \label{prop:Qset}
Let $q, q_1, q_2 \in \mathcal{Q}$ right-continuous functions such that $q \neq 0$ and 
$q_1 + q_2 \neq 0$. 
Then there exists $(A,B)$ satisfying the conditions of Theorem \ref{th:grinc} such that 
for the tail of the unique solution of (\ref{eq:perpetuity}) the asymptotic 
(\ref{eq:grinc-lim}) holds with the prescribed $q_1, q_2$.
Furthermore, there exists $(A,B)$ satisfying the conditions of Theorem \ref{th:max} 
such that for the tail of the unique solution of (\ref{eq:maxeq}) the asymptotic 
(\ref{eq:max-lim}) holds with the prescribed $q$.
\end{proposition}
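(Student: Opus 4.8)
The plan is to realize the prescribed functions by choosing $(A,B)$ with $AB=0$ a.s., because in that case both fixed point equations collapse to a transparent explicit solution. First I would record the reduction. If $AB=0$ a.s., $(A_i,B_i)_{i\ge1}$ are i.i.d.\ copies of $(A,B)$ and $N:=\inf\{i\ge1:A_i=0\}$ (geometric with parameter $p=\p\{A=0\}$), then $B_i=0$ for $i<N$ and $A_N=0$, so every term of $\sum_n A_1\cdots A_{n-1}B_n$ except the $n=N$ one vanishes; the same happens for $\bigvee_n A_1\cdots A_{n-1}B_n$ when moreover $B\ge0$. Hence, by Theorems \ref{th:grinc} and \ref{th:max}, the unique solution of \emph{both} (\ref{eq:perpetuity}) and (\ref{eq:maxeq}) is $X=A_1\cdots A_{N-1}B_N$.

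Next I would fix the ansatz. Let $h,\kappa$ be the span and index attached to the given functions; take $A\in\{0,e^h\}$ with $\p\{A=e^h\}=e^{-\kappa h}$ (so $p=1-e^{-\kappa h}$, $\E A^\kappa=1$, $\E A^\kappa\log_+A=h$, and $\log A$ conditioned on $A\neq0$ is arithmetic with span $h$, i.e.\ (\ref{eq:A-ass1}) holds); let $B=0$ on $\{A\neq0\}$ and, on $\{A=0\}$, let $B$ be distributed as an auxiliary variable $D$ to be chosen, so $\E|B|^\kappa=p\,\E|D|^\kappa$. With Step~1 this gives $X\stackrel{\mathcal D}{=}e^{Mh}D$, where $M:=N-1$ is geometric, independent of $D$, whence
\[
\p\{X>x\}=(1-e^{-\kappa h})\sum_{k\ge0}e^{-\kappa hk}\,\p\{D>xe^{-kh}\},\qquad x>0,
\]
and likewise for $\p\{X<-x\}$.

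The heart of the argument is to prescribe the tail of $X$ and deconvolve. Choosing $N\in\N$ so large that $q_1(1)e^{-\kappa Nh}<1$, I would \emph{define} $\bar X(x):=\p\{X>x\}$ to equal $q_1(x)x^{-\kappa}$ for $x\ge e^{Nh}$ and the constant $q_1(1)e^{-\kappa Nh}$ for $0<x<e^{Nh}$; since $q_1\in\mathcal Q$ is right-continuous, $x^{-\kappa}q_1(x)$ is nonincreasing, right-continuous and $h$-log-periodic, so $\bar X$ is a legitimate survival function with $\bar X(0+)<1$, and $x^\kappa e^{\kappa nh}\bar X(xe^{nh})=q_1(x)$ for all $x>0$ once $n$ is large — precisely the limit in (\ref{eq:grinc-lim}). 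Writing the displayed identity at $x$ and at $xe^{-h}$ and subtracting forces
\[
\p\{D>y\}=\frac{\bar X(y)-e^{-\kappa h}\bar X(ye^{-h})}{1-e^{-\kappa h}},\qquad y>0,
\]
and iterating this recovers the displayed formula (the remainder $e^{-\kappa h(K+1)}\bar X(ye^{-(K+1)h})\to0$). It then remains to check that this right-hand side is a bona fide survival function: right-continuity and $\le1$ are immediate; it vanishes for $y\ge e^{(N+1)h}$ because there $h$-log-periodicity gives $\bar X(ye^{-h})=e^{\kappa h}\bar X(y)$; near $0$ it is the constant $q_1(1)e^{-\kappa Nh}$; and in between it is nonincreasing. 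The one genuinely nontrivial point is nonnegativity, i.e.\ $\bar X(ye^{-h})\le e^{\kappa h}\bar X(y)$ for every $y$, which is the equality $e^{\kappa h}$ on the tail, the ratio $1$ on the flat part, and, on the seam interval $(e^{Nh},e^{(N+1)h})$, a direct consequence of $x^{-\kappa}q_1(x)$ being nonincreasing. This $D$ is bounded, so $\E|D|^\kappa<\infty$, the hypotheses of Theorem \ref{th:grinc} (resp.\ \ref{th:max}) hold, and by Step~1 the solution $X\stackrel{\mathcal D}{=}e^{Mh}D$ has exactly the prescribed tail.

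Finally, for the two-sided case of (\ref{eq:perpetuity}) I would run the same construction for each tail, picking $N,N'$ with $q_1(1)e^{-\kappa Nh}+q_2(1)e^{-\kappa N'h}\le1$, defining $\p\{X<-x\}$ from $q_2$ in the same way, and letting $D$ carry the two deconvolved tails together with an atom of the remaining mass at $0$; since $e^{Mh}>0$ we get $\operatorname{sign}X=\operatorname{sign}D$ and both tails come out as wanted. For (\ref{eq:maxeq}) only the positive side is used, which forces $B\ge0$ and hence $X\ge0$, as required there. I expect the main obstacle to be exactly the verification in the third step that the one-step deconvolution $y\mapsto\bigl(\bar X(y)-e^{-\kappa h}\bar X(ye^{-h})\bigr)/(1-e^{-\kappa h})$ is nonincreasing and nonnegative; this is where the structural hypotheses defining $\mathcal Q$ — $h$-log-periodicity and monotonicity of $x^{-\kappa}q(x)$ — and the freedom to flatten $\bar X$ below the lattice point $e^{Nh}$ are essential.
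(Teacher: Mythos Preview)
Your approach is correct and shares the paper's key device of taking $AB=0$ a.s., so that the solution of both fixed-point equations collapses to $X=A_1\cdots A_{N-1}B_N$. The constructions diverge from there. The paper (working with $\kappa=1$, $h=\log 2$) lets $A$ conditioned on $\{A\neq 0\}$ be geometrically distributed on $\{1,2,4,\ldots\}$ and confines $B$ to $[1,2)$; it then computes the distribution of $\log_2(A_1\cdots A_{N-1})=S_{N-1}$ via generating functions, obtains the explicit closed form $\p\{X>x\}=x^{-1}\tfrac{1-2p}{2(1-p)}\,2^{\{\log_2 x\}}\bigl[2-H(2^{\{\log_2 x\}})\bigr]$, and finally reads off which $H$ produces a given $q$. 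You instead take $A$ on $\{A\neq 0\}$ to be the single atom $e^h$, so that $A_1\cdots A_{N-1}=e^{Mh}$ with $M$ geometric and no generating-function step is needed; you then prescribe the target tail $\bar X$ directly (equal to $q_1(x)x^{-\kappa}$ beyond a lattice point $e^{Nh}$ and flat below) and deconvolve in one step, $\bar D(y)=\bigl(\bar X(y)-e^{-\kappa h}\bar X(ye^{-h})\bigr)/(1-e^{-\kappa h})$, to recover the law of $D=B$ on $\{A=0\}$. Your route handles general $\kappa,h$ and the two-sided case without the paper's reduction, and the telescoping reconstruction of $\bar X$ from $\bar D$ is immediate; the paper's route stays closer to the motivating St.~Petersburg example and yields a law of $B$ supported on the single fundamental interval $[1,e^h)$, whereas your $D$ lives on $[0,e^{(N+1)h}]$ with an atom at $0$. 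In both proofs the only substantive verification is that the candidate law of $B$ is genuine, and in both cases this rests precisely on the defining properties of $\mathcal{Q}$: the monotonicity of $x^{-\kappa}q(x)$ and the $h$-log-periodicity.
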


In the proof of this statement we give an explicit construction of $(A,B)$. 
In fact, for $\kappa =1$, $h = \log 2$ the distribution of $A$ is (almost) the same as in 
the example above, and only the distribution of $B$ depends on $q$. When $q(x) \equiv q$ 
is constant, Lemma \ref{lemma:cont-q} implies that the tail of the solution $X$ is 
regularly varying, more precisely $\p \{ X > x \} \sim q x^{-\kappa}$ as $x \to \infty$. 
An explicit example is given in the proof of Proposition \ref{prop:Qset}.

However, for general $(A,B)$ it seems very difficult to determine $q$. It would be 
interesting to know what conditions on $(A,B)$ imply that $q$ is constant, or $q$ is 
continuous, but these questions do not seem to be tractable with our methods.

\subsection{Infinite mean case} \label{subsect:infinite}

Now we assume that $F_\kappa$ in (\ref{eq:Fkappa}) belongs to the domain of attraction 
of an $\alpha$-stable law with $\alpha \in (0,1]$, that is
\begin{equation} \label{eq:regvar}
1 - F_\kappa(x) =: \overline F_\kappa(x) = \frac{\ell(x)}{x^{\alpha}}, 
\end{equation}
where $\ell$ is a slowly varying function. 
Furthermore, we assumat that the mean is  infinite if $\alpha = 1$.
Introduce the truncated expectation
\begin{equation} \label{eq:def-m}
m(x) = \int_0^x \overline F_\kappa(y) \mathrm{d} y. 
\end{equation}
Simple properties of regularly varying functions imply 
$m(x) \sim \ell(x) x^{1-\alpha}/ (1-\alpha)$ for $\alpha \neq 1$, and $m$ is slowly 
varying for $\alpha = 1$. Recall $U$ from 
(\ref{eq:def-U}) and put $u_n = U( nh )- U(nh-)$. 
Note that $U(x) < \infty$ for all $x \in \R$, since the random walk
$(S_n = \log A_1 + \ldots + \log A_n)_{n \geq 1}$ drifts to infinity under $\p_\kappa$ 
and $\E_\kappa [(\log A)_-]^2 < \infty$ by (\ref{eq:Fknegtail}); see 
Theorem 2.1 by Kesten and Maller \cite{KestenMaller}.
In this case the Blackwell theorem only states that $u_n \to 0$. The so-called strong 
renewal theorem (SRT) gives the exact rate, namely
\begin{equation} \label{eq:SRT}
\lim_{n \to \infty} u_n m(nh) = h \, C_\alpha, \quad 
C_\alpha = \frac{\sin( \alpha \pi)}{(1-\alpha)\pi},
\end{equation}
with the convention $C_1 =1$. The first infinite mean SRT in the arithmetic case was 
shown by Garsia and Lamperti 
\cite{GL}, who proved that (\ref{eq:SRT}) holds for $\alpha \in (1/2,1)$, and under some 
extra assumptions, for $\alpha \leq 1/2$. Their results were extended to the 
nonarithmetic case by Erickson \cite{Erickson}, who also showed (\ref{eq:SRT}) for 
$\alpha =1$, see \cite[formula (2.4)]{Erickson}. Necessary and sufficient conditions for 
the SRT for nonnegative random variables were obtained by Caravenna \cite{Caravenna} and 
Doney \cite{Doney}. It was pointed out in \cite[Appendix]{Kevei} that their result 
extends to our case, where the random variable is not necessarily positive but the left 
tail is exponential. It turned out that if (\ref{eq:regvar}) holds with $\alpha \in 
(0,1/2]$ then (\ref{eq:SRT}) holds if and only if
\begin{equation} \label{eq:Doney-cond}
\lim_{\delta \to 0} \limsup_{x \to \infty} x \overline F_\kappa(x) 
\int_1^{\delta x} \frac{1}{y \overline F_\kappa(y)^2} F_\kappa(x - \mathrm{d} y) = 0.
\end{equation}
It is also shown in \cite{Caravenna, Doney} that for $\alpha > 1/2$ condition 
(\ref{eq:Doney-cond}) automatically holds.

Summarizing, our assumptions on $A$ are the following:
\begin{equation} \label{eq:A-ass2}
\begin{gathered}
A \geq 0, \ \E A^\kappa = 1, \ \text{(\ref{eq:regvar}) and (\ref{eq:Doney-cond}) holds 
for $F_\kappa$ for some $\kappa > 0$ and $\alpha \in (0,1]$,} \\
\text{and  $\log A$ conditioned on $A \neq 0$ is arithmetic with span $h$}.
\end{gathered}
\end{equation}

Recall the definition of $m$ from (\ref{eq:def-m}), and that $m$ is regularly varying.

\begin{theorem} \label{th:imp2}
Assume (\ref{eq:A-ass2}) and for a random variable $X$
\begin{equation} \label{eq:delta-int}
\int_0^\infty y^{\kappa + \delta -1 } | \p \{ X > y \} - \p \{ AX > y \} | \, \mathrm{d} y 
< 
\infty 
\end{equation}
for some $\delta > 0$,
where $A$ and $X$ are independent. Then there exists a function $q \in \mathcal{Q}$ such 
that 
\begin{equation} \label{eq:imp2-lim}
\lim_{n \to \infty} m(nh) \, x^\kappa \, e^{\kappa nh} \, \p \{ X > x e^{nh} \} = q(x), 
\quad
x \in C_{q}. 
\end{equation}
\end{theorem}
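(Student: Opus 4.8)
The plan is to follow the Grincevi\v{c}ius--Goldie renewal-theoretic scheme, but with the key renewal theorem replaced by the infinite-mean strong renewal theorem (\ref{eq:SRT}). First I would reduce the problem to a renewal equation. Set $f(t) = e^{\kappa t}\p\{X > e^t\}$ and $g(t) = e^{\kappa t}\big(\p\{X > e^t\} - \p\{AX > e^t\}\big)$. A direct conditioning on $\log A$ (using the independence of $A$ and $X$ and the change of measure (\ref{eq:def-pkappa})) shows that
\begin{equation*}
f(t) = g(t) + \int_{\R} f(t-s)\, F_\kappa(\mathrm{d} s),
\end{equation*}
at least formally; the only subtlety is the atom of $A$ at $0$, which contributes nothing to $F_\kappa$ and only affects $g$ by a term that is absorbed into the left-hand probability. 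Iterating and using that $S_n \to \infty$ under $\p_\kappa$ (so the Neumann series converges) yields $f = g * U$, i.e. $f(t) = \int_{\R} g(t-s)\, U(\mathrm{d} s)$. Because $\log A$ is arithmetic with span $h$, $U$ is supported on $h\Z$ with masses $u_n = U(nh) - U(nh-)$, so evaluating at $t = \log x + nh$ gives the discrete convolution
\begin{equation*}
x^\kappa e^{\kappa nh}\p\{X > x e^{nh}\} = f(\log x + nh) = \sum_{k \in \Z} g(\log x + (n-k)h)\, u_k .
\end{equation*}

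Next I would verify the hypotheses of the SRT and of the relevant infinite-mean key renewal theorem (Lemma~\ref{lemma:keyren}). Assumption (\ref{eq:A-ass2}) gives exactly (\ref{eq:regvar}) and (\ref{eq:Doney-cond}), hence (\ref{eq:SRT}): $u_n m(nh) \to h C_\alpha$. The function $g$ must be shown to be \emph{directly Riemann integrable}, or rather summable against the renewal masses with the right weighting; here is where the strengthened moment hypothesis (\ref{eq:delta-int}) enters. Writing $\int_0^\infty y^{\kappa+\delta-1}|\p\{X>y\}-\p\{AX>y\}|\,\mathrm{d} y < \infty$ in the $t = \log y$ variable shows $\sum_{k} e^{\delta k h}|g(\log x + kh)| < \infty$ for every $x$, so $g$ decays geometrically along the lattice in the positive direction; on the negative side, $g(t) \to \p\{X > -\infty\} = \cdots$ must be handled — in fact $\p\{X > e^t\} \to 1$ and $\p\{AX > e^t\} \to 1$ as $t \to -\infty$, and their difference, multiplied by $e^{\kappa t}$, tends to $0$ like $e^{\kappa t}$, which combined with (\ref{eq:Fknegtail}) controls the left tail of $U$ and makes the negative-$k$ part of the sum negligible. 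These estimates are exactly what the infinite-mean arithmetic key renewal theorem of Lemma~\ref{lemma:keyren} is designed to digest, so I would invoke it to conclude
\begin{equation*}
\lim_{n\to\infty} m(nh)\, f(\log x + nh) = \frac{h C_\alpha}{h} \sum_{k\in\Z} g(\log x + kh) = C_\alpha \sum_{k\in\Z} e^{\kappa(\log x + kh)}\big(\p\{X > x e^{kh}\} - \p\{AX > x e^{kh}\}\big) =: q(x),
\end{equation*}
with the normalization $h$ cancelling because the lattice spacing of $U$ and the SRT constant both carry it.

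It then remains to check that the limit function $q$ defined by this series lies in $\mathcal{Q}$. Logarithmic periodicity $q(xe^h) = q(x)$ is immediate by re-indexing the sum $k \mapsto k+1$. Monotonicity of $x^{-\kappa} q(x)$ is the more delicate structural point: I would argue as in Goldie that $q(x) = \lim_n m(nh) x^\kappa e^{\kappa nh}\p\{X > xe^{nh}\}$ is a decreasing limit (or at least an eventually monotone limit) of the functions $x \mapsto x^\kappa e^{\kappa nh}\p\{X > xe^{nh}\}$ along a subsequence, each of which, after multiplication by $x^{-\kappa}$, is the nonincreasing function $x\mapsto e^{\kappa nh}\p\{X > xe^{nh}\}$; hence $x^{-\kappa}q(x)$ is a pointwise limit of nonincreasing functions and is itself nonincreasing. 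Finally the convergence (\ref{eq:imp2-lim}) holds at continuity points of $q$ because a monotone function has at most countably many discontinuities and the renewal-theoretic limit is valid wherever $q$ is continuous — alternatively one passes from the lattice version just proved to the stated one by a standard monotonicity sandwich.

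The main obstacle I anticipate is controlling the \emph{uniformity} in the convolution sum when passing the limit through: unlike the finite-mean case where dominated convergence against a summable renewal sequence suffices, here $u_n$ decays only like $1/m(nh)$, which is a slowly varying rate when $\alpha = 1$ and only polynomial of degree $1-\alpha$ otherwise, so the tail of the sum $\sum_k g(\log x + (n-k)h) u_k$ over $k$ near $n/2$ (the ``middle range'') is genuinely delicate and is precisely the content that condition (\ref{eq:Doney-cond}) — equivalently the validity of the SRT for $\alpha \le 1/2$ — is there to rescue. I would therefore spend the bulk of the proof carefully splitting the sum into a bounded-lag part (handled by $u_n m(nh) \to hC_\alpha$ and the geometric decay of $g$ from (\ref{eq:delta-int})), a far-tail part in $k < 0$ (handled by the exponential left tail (\ref{eq:Fknegtail}) of $F_\kappa$ forcing $U$ to have exponentially light left tail), and the middle range, which I would bound using Lemma~\ref{lemma:keyren} applied to the nonnegative majorant $|g|$, exactly mirroring Erickson's treatment but in the two-sided arithmetic setting.
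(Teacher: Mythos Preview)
Your scheme has a genuine gap at the step where you invoke Lemma~\ref{lemma:keyren} directly on the unsmoothed function $g(t)=e^{\kappa t}\big(\p\{X>e^t\}-\p\{AX>e^t\}\big)$. That lemma requires both the lattice summability $\sum_{j\in\Z}|g(x+jh)|<\infty$ for every $x$ and the pointwise decay $g(x)=O(x^{-1})$. Condition~(\ref{eq:delta-int}), after the change of variables $y=e^t$, yields only the \emph{integral} bound $\int_\R e^{\delta t}|g(t)|\,\mathrm{d}t<\infty$; this does \emph{not} imply either summability along the lattice for each fixed offset $x$ or a pointwise bound $g(t)=O(e^{-\delta t})$, since $g$ need not be monotone or continuous. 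Your sentence ``writing \ldots\ in the $t=\log y$ variable shows $\sum_k e^{\delta kh}|g(\log x+kh)|<\infty$'' is exactly the unjustified passage. This is not a technicality: it is precisely the reason the paper (and Goldie before it) introduces the smoothing operator $\widehat g(s)=\int_{-\infty}^s e^{-(s-t)}g(t)\,\mathrm{d}t$.

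The paper's proof works with the smoothed renewal equation~(\ref{eq:ren-smooth}) and applies Lemma~\ref{lemma:keyren} to $\widehat\psi$. The integral bound $\int e^{\delta t}|\psi(t)|\,\mathrm{d}t<\infty$ does imply $\widehat\psi(s)=O(e^{-\delta s})$ pointwise and $\sum_j|\widehat\psi(x+jh)|<\infty$, because the exponential averaging converts $L^1$ control into $L^\infty$ control. The price is that one obtains the limit only for $\widehat f$, and must then ``unsmooth'' via the integral identity (writing $e^s\widehat f(s)$ as an integral of $u^\kappa\p\{X>u\}$) to recover the limit for $f$ itself; this unsmoothing step is what forces the restriction to continuity points of $q$ in~(\ref{eq:imp2-lim}). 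Note that if your direct approach were valid you would in fact get convergence for \emph{all} $x$, not just $x\in C_q$, which should already signal that something has been assumed that~(\ref{eq:delta-int}) does not provide. The distinction here is the same as that between~(\ref{eq:int-cond}) and~(\ref{eq:sum-cond}) in Theorem~\ref{th:imp}. (There is also a minor slip: Lemma~\ref{lemma:keyren} gives limit $hC_\alpha\sum_j z(x+jh)$, so no cancellation of $h$ occurs.)
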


Since $m$ is regularly varying, $m(\log x)$ is slowly varying, and  
$m(\log x + nh) \sim m(nh)$ as $n \to \infty$. For  a
continuous nonzero function $q$ formula (\ref{eq:imp2-lim}) implies
\[
\p \{ X > x \} \sim \frac{q(x)}{x^\kappa m(\log x)} \quad \text{as } x \to 
\infty.
\]

As in Theorem \ref{th:imp} it is possible to give a stronger condition, similar to 
(\ref{eq:sum-cond}), which implies that (\ref{eq:imp2-lim}) holds for all $x >0$. 
However, in the corresponding key renewal theorem below (Lemma \ref{lemma:keyren}) 
besides summability a growth condition is also needed. Therefore the resulting stronger 
condition would be unnatural and it would not be clear how to check its validity neither 
for perpetuity equation (\ref{eq:perpetuity}) nor for maximum equation 
(\ref{eq:maxeq}).

The maximum and perpetuity results are the following.

\begin{theorem} \label{th:max2}
Assume (\ref{eq:A-ass2}) and $\E |B|^\nu < \infty$ for some $\nu > \kappa$. Let $X$ be 
the 
unique solution of (\ref{eq:maxeq}). Then 
there exists a function $q \in \mathcal{Q}$ such that
\[
\lim_{n \to \infty} m(nh) x^\kappa e^{\kappa nh}  \p \{ X > x e^{nh} \} = q(x),
\quad x \in C_q.
\]
If $B \geq 0$ a.s.~and $\p \{ B > 0 \} > 0$ then $q(x) > 0$.
\end{theorem}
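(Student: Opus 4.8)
The plan is to deduce this result from the general implicit renewal theorem in the infinite mean case, Theorem \ref{th:imp2}, exactly as Theorem \ref{th:imp} is specialized to the maximum equation in Theorem \ref{th:max}. First I would verify that the unique solution $X$ of \eqref{eq:maxeq} satisfies the integrability hypothesis \eqref{eq:delta-int} with $\kappa$ replaced by $\kappa+\delta$ for a suitable $\delta>0$. The key observation is the pointwise difference identity for the maximum equation: since $X \stackrel{\mathcal D}{=} AX \vee B$ with $(A,B)$ independent of $X$, for every $y>0$
\[
\p\{X>y\} - \p\{AX>y\} = \p\{AX \vee B > y\} - \p\{AX>y\} = \p\{AX \le y < B\} \le \p\{B>y\}.
\]
Hence the integrand in \eqref{eq:delta-int} is dominated by $y^{\kappa+\delta-1}\p\{B>y\}$, and $\int_0^\infty y^{\kappa+\delta-1}\p\{B>y\}\,\mathrm dy$ is a constant multiple of $\E[(B_+)^{\kappa+\delta}] \le \E|B|^{\kappa+\delta}$, which is finite whenever we choose $\delta>0$ with $\kappa+\delta \le \nu$ (possible since $\nu>\kappa$). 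This gives \eqref{eq:delta-int}, so Theorem \ref{th:imp2} applies and yields a function $q\in\mathcal Q$ with the stated limit at every $x\in C_q$.

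It remains to argue positivity of $q$ under the extra hypotheses $B\ge 0$ a.s.\ and $\p\{B>0\}>0$. Since $q\in\mathcal Q$ is either identically zero or strictly positive, it suffices to exhibit a single point at which the limit is positive. Here I would follow the route used for Theorem \ref{th:max} (cf.\ \cite[Theorem 1.3.8]{Iksanov}): iterating the fixed point equation gives the explicit representation $X = \max_{k\ge 0} A_1\cdots A_k B_{k+1}$ (with the convention of an empty product equal to $1$), where $(A_k,B_k)$ are iid copies of $(A,B)$; since $B\ge 0$ this is a genuine nonnegative series-type maximum. In particular $X \ge B_1$, so $\p\{X>x\} \ge \p\{B>x\}$, but more usefully $\p\{X>x\} \ge \p\{A_1 B_2 > x\}$ and similar lower bounds, which — combined with the Cramér-type change of measure and the fact that $\log A$ is arithmetic with span $h$ — produce a strictly positive contribution to the limit along the lattice $\{xe^{nh}\}$ for a fixed $x$ in the support. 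Concretely, one lower-bounds $m(nh)x^\kappa e^{\kappa nh}\p\{X>xe^{nh}\}$ by the corresponding quantity with $X$ replaced by $A_1\cdots A_k B_{k+1}$ for fixed $k$, rewrites this using \eqref{eq:def-pkappa} as an expectation under $\p_\kappa$, and invokes the strong renewal theorem \eqref{eq:SRT} (which underlies Lemma \ref{lemma:keyren} and hence Theorem \ref{th:imp2}) to see the limit is bounded below by a positive constant.

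The main obstacle I anticipate is the positivity argument, not the integrability check. The delicate point is that in the infinite mean regime the normalization carries the extra slowly varying factor $m(nh)$, so one cannot simply quote the nonarithmetic or finite-mean positivity proofs verbatim; one must track how $m(nh)$ interacts with the lower bounds coming from the truncated representation of $X$, and ensure that the renewal mass $u_n$ appearing after the change of measure is captured with the correct asymptotics $u_n \sim h C_\alpha/m(nh)$ from \eqref{eq:SRT}. Once it is shown that $\liminf_{n\to\infty} m(nh)x^\kappa e^{\kappa nh}\p\{X > x e^{nh}\} > 0$ for one admissible $x$, the dichotomy for elements of $\mathcal Q$ forces $q$ to be strictly positive everywhere, completing the proof.
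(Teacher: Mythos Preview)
Your verification of the integrability hypothesis \eqref{eq:delta-int} is correct and is exactly what the paper does (it defers to \cite{Kevei}, where the same bound $\p\{X>y\}-\p\{AX>y\}=\p\{AX\le y<B\}\le\p\{B>y\}$ is used together with $\E|B|^{\kappa+\delta}<\infty$). So the first half of your plan matches the paper.

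The positivity argument, however, diverges from the paper and, as you wrote it, contains a gap. Replacing $X$ by $A_1\cdots A_k B_{k+1}$ \emph{for fixed $k$} cannot produce a positive lower bound: after the exponential tilt \eqref{eq:def-pkappa} one has
\[
e^{\kappa nh}\,\p\{A_1\cdots A_k>c\,e^{nh}\}
=\E_\kappa\!\big[e^{-\kappa(S_k-nh)}\,I(S_k>nh)\big],
\]
and for fixed $k$ this tends to $0$ as $n\to\infty$, so the factor $m(nh)$ only makes things worse. To get something positive you would need to sum over $k$ (which is precisely where the renewal function and \eqref{eq:SRT} enter), but the events $\{\Pi_k B_{k+1}>t\}$ overlap, so summing their probabilities does not immediately lower-bound $\p\{X>t\}$; carrying this through requires a genuine additional argument that you have not supplied.

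The paper avoids all of this. It uses Remark~\ref{rem:positivity}: when $B\ge 0$ the difference $\psi(x)=e^{\kappa x}\big(\p\{X>e^x\}-\p\{AX>e^x\}\big)$ is nonnegative, and $\p\{B>0\}>0$ forces $\psi\not\equiv 0$. From the proof of Theorem~\ref{th:imp2} one has $q(v)=(vC(\log v))'$ a.e.\ with $C(s)=hC_\alpha\sum_{j\in\Z}\widehat\psi(s+jh)$, and the integral representation of $vC(\log v)$ in Remark~\ref{rem:positivity} shows that for nonnegative $\psi$ the function $vC(\log v)$ is constant iff $\psi\equiv 0$. Hence $q\not\equiv 0$, and the dichotomy for $\mathcal Q$ gives $q(x)>0$ for all $x$. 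This route requires no probabilistic lower bound on $\p\{X>t\}$ and no separate tracking of the factor $m(nh)$.
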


In the special case $B \equiv 1$ this theorem was obtained by Korshunov \cite[Theorem 
2]{Kor}.

\begin{theorem} \label{th:perp2}
Assume (\ref{eq:A-ass2}) and $\E |B|^\nu < \infty$ for some $\nu > \kappa$. Let $X$ be 
the unique solution of (\ref{eq:perpetuity}).
Then there exist functions $q_1, q_2 \in \mathcal{Q}$ such that 
\begin{equation*} 
\begin{split}
& \lim_{n \to \infty} m(nh) x^\kappa e^{\kappa nh}  \p \{ X > x e^{ nh} \} = q_1(x),
\quad x \in C_{q_1},\\
& \lim_{n \to \infty} m(nh)  x^\kappa e^{\kappa nh}  \p \{ X < -x e^{ nh} \} = 
q_2(x), 
\quad x \in C_{q_2}.
\end{split}
\end{equation*}
If $\p \{ A x + B = x\} < 1$ for any $x \in \R$ then $q_1(x) + q_2 (x) > 0$.
\end{theorem}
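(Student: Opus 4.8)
The plan is to reduce Theorem~\ref{th:perp2} to the infinite mean implicit renewal theorem, Theorem~\ref{th:imp2}, applied once to $X$ and once to $-X$; the two tail functions $q_1,q_2$ are then produced by that theorem, and positivity is handled separately by Goldie's non‑degeneracy argument. This follows the template of the proof of Theorem~\ref{th:grinc}, the only genuine change being that one must now verify the \emph{strengthened} integrability condition (\ref{eq:delta-int}) with an honest $\delta>0$. As preliminaries I would record that, by convexity of $s\mapsto\E A^s$ on $[0,\kappa]$ with $\E A^0=\E A^\kappa=1$, one has $\E A^s<1$ for $s\in(0,\kappa)$ and $\E\log A<0$; since $\E\log_+|B|<\infty$, equation (\ref{eq:perpetuity}) has a unique solution $X=\sum_{n\ge1}A_1\cdots A_{n-1}B_n$ with $(A_n,B_n)$ i.i.d.\ copies of $(A,B)$, the series converging absolutely a.s. From $\E|A_1\cdots A_{n-1}B_n|^s=(\E A^s)^{n-1}\E|B|^s$ and $\E|B|^s<\infty$ for $s<\kappa<\nu$ one gets $\E|X|^s<\infty$ for every $s\in(0,\kappa)$ (a moment of order exactly $\kappa$ need not be available).

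The main work is the verification of (\ref{eq:delta-int}). Since $X\stackrel{\mathcal{D}}{=}AX+B$ with $X$ independent of $(A,B)$ we have $\p\{X>y\}=\p\{AX+B>y\}$, so
\[
\bigl|\p\{X>y\}-\p\{AX>y\}\bigr|\le\p\{AX\le y<AX+B\}+\p\{AX+B\le y<AX\}.
\]
Multiplying by $y^{\kappa+\delta-1}$, integrating over $(0,\infty)$ and applying Fubini gives
\[
\int_0^\infty y^{\kappa+\delta-1}\bigl|\p\{X>y\}-\p\{AX>y\}\bigr|\,\mathrm{d}y
\le\frac{1}{\kappa+\delta}\,\E\bigl|(AX+B)_+^{\kappa+\delta}-(AX)_+^{\kappa+\delta}\bigr|.
\]
I then bound the right-hand side using $|a_+^p-b_+^p|\le|a-b|^p$ for $0<p\le1$ and $|a_+^p-b_+^p|\le p|a-b|(|a|\vee|b|)^{p-1}$ for $p\ge1$, with $a=AX+B$, $b=AX$, $|a-b|=|B|$, $|a|\vee|b|\le|AX|+|B|$. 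By independence the expectations split into $\E|B|^p$ and, when $p:=\kappa+\delta>1$, a term $\E|X|^{p-1}\,\E[A^{p-1}|B|]$. These are finite for $\delta>0$ small: $\E|B|^p<\infty$ as $p<\nu$; $\E|X|^{p-1}<\infty$ as $p-1<\kappa$; and $\E[A^{p-1}|B|]<\infty$ by H\"older with exponents $r,r'$ satisfying $(p-1)r\le\kappa$, $r'\le\nu$, solvable because $(p-1)/\kappa+1/\nu<1$ once $\delta$ is small enough (note $p>1$ forces $\nu>p>1$). Thus (\ref{eq:delta-int}) holds and Theorem~\ref{th:imp2} yields $q_1\in\mathcal{Q}$ with $\lim_n m(nh)x^\kappa e^{\kappa nh}\p\{X>xe^{nh}\}=q_1(x)$ on $C_{q_1}$. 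Applying the same argument to $-X$, which is the unique solution of $Y\stackrel{\mathcal{D}}{=}AY-B$ with $\E|-B|^\nu=\E|B|^\nu<\infty$ and for which (\ref{eq:delta-int}) reads $\int_0^\infty y^{\kappa+\delta-1}|\p\{X<-y\}-\p\{AX<-y\}|\,\mathrm{d}y<\infty$, produces $q_2\in\mathcal{Q}$ with the second limit.

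It remains to show $q_1+q_2>0$ when $\p\{Ax+B=x\}<1$ for all $x\in\R$. Here I would follow Goldie \cite[p.~157]{Goldie}, as in the proof of Theorem~\ref{th:grinc}: from the explicit formula for $q$ supplied by the proof of Theorem~\ref{th:imp2}, each $q_i(x)$ equals, up to a fixed positive constant coming from the strong renewal theorem (\ref{eq:SRT}), the limit of a renewal-type convolution of the renewal sequence $(u_n)$ with a driving sequence built from the integrand of (\ref{eq:delta-int}), and the non‑degeneracy hypothesis prevents the total mass of that driving sequence from vanishing; the positivity clause of Theorem~\ref{th:max2} plays the role that Iksanov's Theorem~1.3.8 plays in the finite mean case.

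I expect this last step to be the main obstacle, since it depends on the precise form of $q$ delivered by Theorem~\ref{th:imp2} and on transferring positivity from the maximum equation to the perpetuity; the verification of (\ref{eq:delta-int}), although the bulk of the computation, is routine Goldie-style bookkeeping once one is careful about the cases $\kappa+\delta\lessgtr1$ and about the H\"older balancing forced by the fact that $\E A^\kappa=1$ entails $\E A^t=\infty$ for $t>\kappa$.
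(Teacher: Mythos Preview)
Your proposal is correct and follows essentially the same route as the paper: verify condition~(\ref{eq:delta-int}) so that Theorem~\ref{th:imp2} applies to $X$ and to $-X$, then obtain positivity of $q_1+q_2$ via Goldie's argument \cite[p.~157]{Goldie} combined with the positivity clause of Theorem~\ref{th:max2}. The paper simply cites \cite{Kevei} for the integrability check you spell out in detail, and your worry about the last step is unfounded---the reduction from the perpetuity to the maximum equation is exactly what both you and the paper invoke.
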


Note that we only state convergence in continuity points in both cases.

The set of possible $q$ functions contains the nonzero right-continuous functions 
in $\mathcal{Q}$, i.e.~Proposition \ref{prop:Qset} is valid in this case too.

\subsection{Beyond Cram\'er's condition} \label{subsect:<1}

Assume now that $\E A^\kappa = \theta < 1$ for some $\kappa > 0$, and
$\E A^t = \infty$ for any $t > \kappa$. The assumption $\E A^t = \infty$ for all $t > 
\kappa$ means that $F_\kappa$ is heavy-tailed. The same renewal method leads now 
to a \emph{defective} renewal equation. To analyze the asymptotic behavior of the 
resulting equation we extend the results by Asmussen, Foss and Korshunov \cite[Section 
6]{AFK} to the arithmetic case.

Assume that $H$ is the distribution function of an arithmetic random variable with span 
$h$. 
Let $p_k = H(kh) - H(kh-)$, $k \in \Z$, and $p^{*2}_k = (H * H)(kh) - (H * H)(kh-)$. 
Then $H$ is $h$-subexponential, $H \in \mathcal{S}_h$, if 
$p_{n+1} \sim p_n$ and $p^{*2}_n \sim 2 p_n$ as $n \to \infty$. (According to the 
terminology introduced by 
Asmussen, Foss and Korshunov \cite{AFK} for distributions on $[0,\infty)$ and by Foss, 
Korshunov and Zachary \cite[Section 4.7]{FKZ} for distributions on $\R$, these 
distributions are $(0,h]$-subexponential.) In order to use a slight extension of Theorem 
5 
\cite{AFK} we need the additional natural assumption $\sup_{k \geq n} p_k =  O(p_n)$ as 
$n \to \infty$. 
Although in \cite{AFK} the distributions are concentrated on $(0,\infty)$ the results 
remain true in our setup due to the extra growth assumption. We refer to 
\cite[Appendix]{Kevei}.

Our assumptions on $A$ are the following:
\begin{equation} \label{eq:A-ass3}
\begin{gathered}
\E A^\kappa = \theta < 1, \  \kappa > 0, \
F_\kappa \in \mathcal{S}_{h}, \ \sup_{k \geq n} p_k =  O(p_n) 
\text{ as $n \to \infty$,}\\
\text{ and $\log A$ conditioned on $A \neq 0$ is arithmetic with span $h$}. 
\end{gathered}
\end{equation}

\begin{theorem} \label{th:imp3}
Assume (\ref{eq:A-ass3}) and (\ref{eq:delta-int}) for some $\delta > 0$.
Then there exists a function $q \in  \mathcal{Q}$ such that
\begin{equation} \label{eq:imp3-lim}
\lim_{n \to \infty} p_n^{-1} \,
x^\kappa \, e^{\kappa nh} \, \p \{ X > x e^{nh} \} = q(x), \quad x \in C_{q}. 
\end{equation}
with $p_n = F_\kappa(nh) - F_\kappa(nh-)$.
\end{theorem}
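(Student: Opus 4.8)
The plan is to follow the Grincevi\v{c}ius--Goldie renewal-theoretic route of \cite{Goldie}, now in the \emph{defective} arithmetic setting. Put $D(y) = \p\{X > y\} - \p\{AX > y\}$ and pass to exponential scale by setting $r(t) = e^{\kappa t}\p\{X > e^t\}$ and $\zeta(t) = e^{\kappa t}D(e^t)$. Using the identity $\E[A^\kappa g(\log A)] = \E_\kappa g(\log A)$ and the arithmetic assumption on $\log A$, the trivial decomposition $\p\{X>y\} = \p\{AX>y\} + D(y)$ is rewritten as the renewal equation
\[
r(t) = \zeta(t) + \int_{\R} r(t-s)\, F_\kappa(\mathrm{d}s), \qquad t \in \R,
\]
where $F_\kappa$ is the law of $\log A$ under $\p_\kappa$; it is a sub-probability measure, of total mass $\theta = \E A^\kappa \in (0,1)$, carried by $h\Z$. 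Condition (\ref{eq:delta-int}) becomes $\int_\R e^{\delta t}|\zeta(t)|\,\mathrm{d}t < \infty$, which combined with the elementary bound $|\zeta(t)| \le e^{\kappa t}$ for $t \le 0$ gives in particular $\zeta \in L^1(\R)$.

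First I would establish the a priori bound $\p\{X>x\} = O(x^{-\kappa})$; exactly as in the proof of Theorem \ref{th:imp} this is forced by (\ref{eq:delta-int}) through the renewal structure. Hence $r$ is bounded, and iterating the renewal equation with $\|F_\kappa^{*n}\| = \theta^n \to 0$ gives the representation $r = U * \zeta$, where $U = \sum_{n\ge0}F_\kappa^{*n}$ is now a \emph{finite} measure (of total mass $(1-\theta)^{-1}$) carried by $h\Z$; write $U_m = U(\{mh\})$ and $p_m = F_\kappa(\{mh\})$. Note also that $r(t)$ equals $e^{\kappa t}$ times a nonincreasing function of $t$; this structure is inherited by any limit of $x \mapsto p_n^{-1}e^{\kappa nh}\p\{X > xe^{nh}\}$, so such a limit $q$ has $x^{-\kappa}q(x)$ nonincreasing, and satisfies $q(xe^h) = q(x)$ because $p_{n+1}\sim p_n$ (a consequence of $F_\kappa \in \mathcal{S}_h$). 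Thus $q \in \mathcal{Q}$, and this is also why convergence can only be expected at the continuity points of $q$.

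It remains to apply the defective $h$-subexponential key renewal theorem, Lemma \ref{lemma:keyren2}, which extends \cite[Section~6]{AFK} to the arithmetic case. Since $F_\kappa \in \mathcal{S}_h$ the renewal masses satisfy $U_m \sim (1-\theta)^{-2}p_m$, and the growth hypothesis $\sup_{k\ge n}p_k = O(p_n)$ supplies the uniform control $U_m \le Cp_m$ together with $p_{n-j}\le C'p_n$ in the relevant range of $j$, which is exactly what is needed to exchange limit and summation: as $n \to \infty$,
\[
p_n^{-1}\, x^\kappa e^{\kappa nh}\, \p\{X > x e^{nh}\} = p_n^{-1}\, r(\log x + nh) = \sum_{j\in\Z}\frac{U_{n-j}}{p_n}\,\zeta(\log x + jh) \;\longrightarrow\; \frac{1}{(1-\theta)^2}\sum_{j\in\Z}\zeta(\log x + jh),
\]
yielding (\ref{eq:imp3-lim}) at every $x \in C_q$ with $q(x) = (1-\theta)^{-2}\sum_{j\in\Z}\zeta(\log x + jh)$; membership $q \in \mathcal{Q}$ was noted above. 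I expect the main obstacle to be the regularity of the forcing term: $\zeta$ need not be directly Riemann integrable and $\sum_j\zeta(\log x + jh)$ need not be \emph{a priori} absolutely convergent, so -- as in the finite-mean case (Theorem \ref{th:imp}) -- one first replaces $r$ (equivalently $\zeta$) by an averaged, and hence well-behaved, version, applies Lemma \ref{lemma:keyren2} to it, and then removes the averaging using that $D$ is a difference of two nonincreasing functions of bounded total variation. It is precisely here that the strict inequality $\delta > 0$ in (\ref{eq:delta-int}) enters: it makes the smoothed tails $\sum_{j\ge N}|\widehat\zeta(\log x + jh)|$ decay, so that the $j\approx n$ terms -- which are multiplied by the heavy masses $U_{n-j}$ -- are negligible, something that would fail under the weaker condition (\ref{eq:int-cond}).
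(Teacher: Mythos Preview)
Your proposal is correct and follows essentially the same route as the paper: derive the (defective) renewal equation, smooth to obtain a forcing term satisfying the hypotheses of Lemma~\ref{lemma:keyren2} (here the condition $\widehat\psi(s)=O(e^{-\delta s})$ from (\ref{eq:delta-int}) is what yields $\sup_{x\in[0,h]}\widehat\psi(x+nh)=o(p_n)$ via subexponentiality), apply that lemma, and then unsmooth as in Theorem~\ref{th:imp}. One cosmetic point: your constant $(1-\theta)^{-2}$ reflects your choice to let $F_\kappa$ denote the unnormalized measure of mass $\theta$, whereas the paper normalizes $F_\kappa$ to be a genuine distribution and writes the constant as $\theta(1-\theta)^{-2}$; also, after unsmoothing the function $q$ is identified only through the integral relation (\ref{eq:q-int}), so the pointwise formula $q(x)=(1-\theta)^{-2}\sum_{j}\zeta(\log x+jh)$ need not hold without the stronger summability assumption analogous to (\ref{eq:sum-cond}).
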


For a possible stronger version of (\ref{eq:imp3-lim}) which holds for all $x \in \R$ 
see the comment after Theorem \ref{th:imp2}.

The corresponding maximum and perpetuity results are the following.

\begin{theorem} \label{th:max3}
Assume (\ref{eq:A-ass3}) and $\E |B|^\nu < \infty$ for some $\nu > \kappa$. Let $X$ be 
the unique solution of (\ref{eq:maxeq}). Then 
there exists a function $q \in \mathcal{Q}$ such that
\[
\lim_{n \to \infty} p_n^{-1} \, x^\kappa \,  e^{\kappa nh} \,  \p \{ X > x e^{nh} \} = 
q(x),
\quad x \in C_q.
\]
If $B \geq 0$ a.s.~and $\p \{ B > 0 \} > 0$ then $q(x) > 0$.
\end{theorem}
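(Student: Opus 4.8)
The plan is to deduce the statement from the defective implicit renewal theorem, Theorem~\ref{th:imp3}, exactly as the finite and infinite mean maximum theorems are deduced from their implicit renewal counterparts: first verify the integrability hypothesis (\ref{eq:delta-int}) for the solution $X$, which already yields the existence of $q\in\mathcal{Q}$ and the stated limit, and then read off the positivity of $q$ from the renewal representation behind the proof of Theorem~\ref{th:imp3}. That the solution of (\ref{eq:maxeq}) exists and is unique under (\ref{eq:A-ass3}) and $\E|B|^\nu<\infty$ is standard: convexity of $s\mapsto\log\E A^s$ on $[0,\kappa]$ with $\E A^0=1$, $\E A^\kappa=\theta<1$ forces $\E\log A\le\kappa^{-1}\log\theta<0$ (meaningfully, as $\E(\log A)_+<\infty$ follows from $\E A^\kappa<\infty$), and one checks that $X\stackrel{\mathcal{D}}{=}\bigvee_{k\ge0}\Pi_kB_{k+1}$, $\Pi_0:=1$, $\Pi_k:=A_1\cdots A_k$, is the unique solution and is a.s.\ finite (Markov's inequality, $\E\Pi_k^\kappa=\theta^k$, $\E|B|^\kappa<\infty$). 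Since $B\ge0$ implies $X\stackrel{\mathcal{D}}{=}AX\vee B\ge B\ge0$, we record for later use that $X\ge0$ a.s.\ and $\p\{X>0\}\ge\p\{B>0\}$.

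To verify (\ref{eq:delta-int}), decompose $\{AX\vee B>y\}=\{AX>y\}\cup\{B>y\}$ and use $A,X\ge0$ and the independence of $X$ from $(A,B)$ to get, for every $y>0$,
\[
\p\{X>y\}-\p\{AX>y\}=\p\{B>y,\ AX\le y\}\in\bigl[0,\ \p\{B_+>y\}\bigr].
\]
Hence, for any $\delta\in(0,\nu-\kappa]$,
\[
\int_0^\infty y^{\kappa+\delta-1}\bigl|\p\{X>y\}-\p\{AX>y\}\bigr|\,\mathrm{d}y
\le\int_0^\infty y^{\kappa+\delta-1}\p\{B_+>y\}\,\mathrm{d}y
=\frac{\E[(B_+)^{\kappa+\delta}]}{\kappa+\delta}<\infty,
\]
the last quantity being finite because $\kappa+\delta\le\nu$ and $\E|B|^\nu<\infty$. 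Thus (\ref{eq:delta-int}) holds and Theorem~\ref{th:imp3} delivers $q\in\mathcal{Q}$ with $\lim_{n\to\infty}p_n^{-1}x^\kappa e^{\kappa nh}\p\{X>xe^{nh}\}=q(x)$ for all $x\in C_q$, which is the first assertion.

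For the positivity, assume in addition $B\ge0$ a.s.\ and $\p\{B>0\}>0$; since $q$ takes values in $[0,\infty)$ and, as observed after the definition of $\mathcal{Q}$, a function in $\mathcal{Q}$ is either identically zero or strictly positive, it suffices to rule out $q\equiv0$. In the proof of Theorem~\ref{th:imp3} one shows that $r(t):=e^{\kappa t}\p\{X>e^t\}$ solves the defective renewal equation $r=g+F_\kappa*r$, equivalently $r=\sum_{k\ge0}F_\kappa^{*k}*g$, where $g(t):=e^{\kappa t}\bigl(\p\{X>e^t\}-\p\{AX>e^t\}\bigr)$, and that $q$ arises from the key renewal theorem of Lemma~\ref{lemma:keyren2} as a positive multiple of the functional $x\mapsto\sum_{j\in\Z}g(\log x+jh)$, the series converging by the moment bound above. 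Now $B\ge0$ makes $g\ge0$, by the display in the previous paragraph; hence $q\equiv0$ would force $g\equiv0$, which in turn forces $r\equiv0$, i.e.\ $\p\{X>e^t\}=0$ for every $t\in\R$, and therefore $\p\{X>0\}=0$ — impossible, since $\p\{X>0\}\ge\p\{B>0\}>0$. Thus $q\not\equiv0$, so $q(x)>0$ for all $x>0$.

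I expect the only genuinely delicate point to be this last step, which leans on the precise shape of the limit in Lemma~\ref{lemma:keyren2}: one needs that the constant there depends on the driving function $g$ through a strictly positive linear functional which is finite under (\ref{eq:delta-int}), so that for nonnegative $g$ it vanishes exactly when $g$ does. The remaining ingredients — the identity for $\p\{X>y\}-\p\{AX>y\}$, the moment estimate, and the elementary facts $X\ge0$, $\p\{X>0\}\ge\p\{B>0\}$ — are routine, and the argument mirrors the proofs of Theorems~\ref{th:max} and~\ref{th:max2}.
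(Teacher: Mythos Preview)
Your proposal is correct and follows essentially the same route as the paper: verify the integrability hypothesis (\ref{eq:delta-int}) (the paper defers this to \cite{Kevei}, you do it directly via the clean identity $\p\{X>y\}-\p\{AX>y\}=\p\{B>y,\,AX\le y\}$), then invoke Theorem~\ref{th:imp3} and read off positivity from nonnegativity of $\psi$ exactly as in Remark~\ref{rem:positivity}. One small imprecision worth flagging, since you yourself call it the delicate point: in the proof of Theorem~\ref{th:imp3} Lemma~\ref{lemma:keyren2} is applied to the \emph{smoothed} function $\widehat\psi$, not to $g=\psi$, so $q$ is not literally a multiple of $\sum_{j}g(\log x+jh)$ but is obtained by unsmoothing $C(s)=\tfrac{\theta}{(1-\theta)^2}\sum_j\widehat\psi(s+jh)$; the implication you need---$\psi\ge0$ and $q\equiv0$ force $\psi\equiv0$---still holds by the argument of Remark~\ref{rem:positivity} with $h/\mu$ replaced by $\theta/(1-\theta)^2$. (Also, the phrase ``use $A,X\ge0$'' in your integrability check is superfluous and in fact $X\ge0$ need not hold when $B$ can be negative; the identity follows from $X\stackrel{\mathcal D}{=}AX\vee B$ alone.)
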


\begin{theorem} \label{th:perp3}
Assume (\ref{eq:A-ass3}) and $\E |B|^\nu < \infty$ for some $\nu > \kappa$. Let $X$ be 
the unique solution of (\ref{eq:perpetuity}). 
Then there exist functions $q_1, q_2 \in \mathcal{Q}$ such that 
\begin{equation*} 
\begin{split}
& \lim_{n \to \infty} p_n^{-1} \, x^\kappa \, e^{\kappa nh} \, \p \{ X > x e^{ nh} \} = 
q_1(x),
\quad x \in C_{q_1},\\
& \lim_{n \to \infty} p_n^{-1} \, x^\kappa \, e^{\kappa nh} \,  \p \{ X < -x e^{ nh} \} = 
q_2(x), 
\quad x \in C_{q_2},
\end{split}
\end{equation*}
with $p_n = F_\kappa(nh) - F_\kappa(nh-)$. If $\p \{ A x + B = x\} < 1$ for any $x \in 
\R$, then $q_1(x) + q_2 (x) > 0$.
\end{theorem}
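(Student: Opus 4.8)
The plan is to derive Theorem~\ref{th:perp3} from the general implicit renewal theorem, Theorem~\ref{th:imp3}, exactly in the spirit of Goldie's original argument, using the perpetuity equation (\ref{eq:perpetuity}) to verify the integrability hypothesis (\ref{eq:delta-int}). First I would fix $\nu > \kappa$ with $\E|B|^\nu < \infty$, and by shrinking $\nu$ if necessary assume $\nu \in (\kappa, \kappa+\delta_0)$ for a small $\delta_0$; the goal is then to produce some $\delta > 0$ (namely $\delta = \nu - \kappa$) for which (\ref{eq:delta-int}) holds for the law of $X$, and separately for the law of $-X$, i.e.\ for the left tail. The key algebraic identity is that if $X$ solves (\ref{eq:perpetuity}) with $X$ independent of $(A,B)$, then for $y > 0$
\[
\p\{X > y\} - \p\{AX > y\} = \p\{AX + B > y\} - \p\{AX > y\},
\]
and the right-hand side is controlled by $\p\{|B| > \varepsilon y\} + \p\{AX \in (y - \varepsilon y \cdot \mathrm{sgn}, \, y]\text{-type strips}\}$ after a standard splitting on the size of $B$ relative to $y$. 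The contribution of the $B$-term, integrated against $y^{\kappa+\delta-1}$, is finite because $\E|B|^{\kappa+\delta} \le \E|B|^\nu < \infty$; this is the routine Fubini computation $\int_0^\infty y^{\kappa+\delta-1}\p\{|B|>cy\}\,\mathrm dy = c^{-(\kappa+\delta)}\E|B|^{\kappa+\delta}/(\kappa+\delta)$.

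The genuinely delicate part is bounding the ``strip'' term, i.e.\ showing that
\[
\int_0^\infty y^{\kappa+\delta-1}\,\bigl|\p\{AX + B > y\} - \p\{AX > y\}\bigr|\,\mathrm dy < \infty
\]
without yet knowing any tail bound on $X$. Here I would follow Goldie's device: bound the integrand using the elementary inequality that for independent $U,V$ and all $y$,
\[
\bigl|\p\{U+V>y\}-\p\{U>y\}\bigr| \le \p\{U > y,\ U+V \le y\} + \p\{U \le y,\ U+V>y\},
\]
apply it with $U = AX$, $V = B$, condition on $(A,B)$, and then integrate in $y$. The bound reduces, after a change of variables $y \mapsto Ay$ and using the independence of $X$ from $(A,B)$, to a bound of the form $C\,\E\bigl[A^{\kappa+\delta}\,(\,|B|/A\,)^{\kappa+\delta} \wedge (\text{something involving } |B|)\,\bigr]$ together with a term $\E[A^{\kappa+\delta}]\cdot\int y^{\kappa+\delta-1}|\p\{X>y\}-\dots|$ — but to close the loop one must avoid circularity. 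The clean way, which I expect to be the main obstacle, is: first establish a crude a priori moment bound $\E|X|^{\kappa+\delta} < \infty$ (equivalently $\int_0^\infty y^{\kappa+\delta-1}\p\{|X|>y\}\,\mathrm dy < \infty$), using that $\E A^{\kappa+\delta} < 1$ would be needed — which is \emph{not} assumed here — so instead one uses the representation $X = \sum_{n\ge 0} A_1\cdots A_n B_{n+1}$ together with $\E A^\kappa = 1$ and a subadditivity/interpolation argument, or one invokes that under (\ref{eq:A-ass3}), $\E A^\kappa = \theta < 1$, which \emph{does} give $\E A^{\kappa} < 1$ and hence, since $\E|B|^\nu < \infty$ with $\nu > \kappa$, a finite $(\kappa)$-th moment of $X$ and then a finite $\nu'$-th moment for some $\nu' \in (\kappa, \nu)$ by a direct estimate on the series. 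With such an a priori bound in hand, the strip term is finite and (\ref{eq:delta-int}) follows.

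Once (\ref{eq:delta-int}) is verified for both $X$ and $-X$, Theorem~\ref{th:imp3} applies twice and yields $q_1, q_2 \in \mathcal{Q}$ with the stated limits in $C_{q_1}$ and $C_{q_2}$ respectively; the normalization $p_n^{-1} x^\kappa e^{\kappa n h}$ and the continuity-point caveat are inherited verbatim from Theorem~\ref{th:imp3}. It remains to prove the positivity statement: if $\p\{Ax+B=x\} < 1$ for every $x \in \R$, then $q_1(x) + q_2(x) > 0$. Since each $q_i \in \mathcal{Q}$ is either $\equiv 0$ or strictly positive, it suffices to rule out $q_1 \equiv q_2 \equiv 0$, i.e.\ to show $X$ is not too light-tailed. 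I would argue by contradiction following Goldie \cite[p.~157]{Goldie} combined with Iksanov's argument as in Theorem~\ref{th:grinc}: if both $q_i$ vanish then $\E|X|^{\kappa+\varepsilon} < \infty$ for some $\varepsilon>0$ (or even $X$ decays faster than any such power along the lattice), but then applying the moment map $s \mapsto \E|AX+B|^s$ and comparing $\E|X|^s$ with $\E|AX+B|^s$ at $s$ slightly above $\kappa$, using $\E A^s$ finite and convex with $\E A^\kappa = \theta<1$, forces $X$ to be degenerate in a way incompatible with $\p\{Ax+B = x\}<1$ for all $x$ — more precisely it forces $\p\{Ax_0 + B = x_0\}=1$ for the fixed point $x_0$, a contradiction. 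The potential obstacle here is that the relevant non-degeneracy lemma is usually stated in the Cram\'er regime; I would invoke the version valid under $\E A^\kappa < 1$ and $F_\kappa$ heavy-tailed, which is exactly the setting of (\ref{eq:A-ass3}), pointing to the analogous argument already used to prove positivity in Theorem~\ref{th:grinc} and to Iksanov \cite[Theorem 1.3.8]{Iksanov}, which carries over with the normalization $p_n$ in place of the finite-mean constant.
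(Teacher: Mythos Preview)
Your high-level plan matches the paper's: verify (\ref{eq:delta-int}) for $X$ and for $-X$, apply Theorem~\ref{th:imp3}, then deduce positivity by Goldie's reduction to the maximum equation. However, two of the steps as you describe them would fail.

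First, the a priori bound $\E|X|^{\nu'}<\infty$ for some $\nu'>\kappa$ that you aim for is \emph{false} here. Assumption (\ref{eq:A-ass3}) includes $\E A^t=\infty$ for every $t>\kappa$; since $X\stackrel{\mathcal D}{=}AX+B$ with $X$ independent of $(A,B)$, finiteness of $\E|X|^{\nu'}$ together with $\E|B|^{\nu'}<\infty$ would give $\E A^{\nu'}\,\E|X|^{\nu'}=\E|AX|^{\nu'}<\infty$, a contradiction unless $X\equiv 0$. So your ``direct estimate on the series'' cannot produce such a moment. Fortunately it is not needed. The verification of (\ref{eq:delta-int}) carried out in \cite[proofs of Theorems~3--4]{Kevei} (which the paper simply cites) uses Goldie's Lemma~9.4 to bound the integral by $\E\bigl|((AX+B)_+)^{\kappa+\delta}-((AX)_+)^{\kappa+\delta}\bigr|$; for $\kappa+\delta>1$ the mean-value inequality reduces this to $c\,\E|B|^{\kappa+\delta}+c\,\E\bigl[|B|A^{\kappa+\delta-1}\bigr]\,\E|X|^{\kappa+\delta-1}$. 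Choosing $\delta\le(\nu-\kappa)/\nu<1$, H\"older with exponents $\nu,\ \nu/(\nu-1)$ controls $\E[|B|A^{\kappa+\delta-1}]$ using only $\E A^\kappa<\infty$, while the exponent $\kappa+\delta-1$ lies in $(0,\kappa)$, where convexity of $s\mapsto\E A^s$ and $\E A^\kappa=\theta<1$ give $\E A^{\kappa+\delta-1}<1$ and hence $\E|X|^{\kappa+\delta-1}<\infty$. No moment of $X$ of order above $\kappa$ enters.

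Second, your positivity argument is not Goldie's and breaks down under (\ref{eq:A-ass3}): you claim $q_1\equiv q_2\equiv 0$ forces $\E|X|^{\kappa+\varepsilon}<\infty$ and then invoke ``$\E A^s$ finite'' for $s$ just above $\kappa$---both statements are false here. Goldie's actual argument \cite[p.~157]{Goldie} is a stochastic comparison: one shows that the perpetuity solution dominates the solution $M$ of a maximum equation (\ref{eq:maxeq}) with the same $A$ and a suitable nonnegative $B'$ with $\p\{B'>0\}>0$. Then Theorem~\ref{th:max3} combined with Remark~\ref{rem:positivity} (which uses $\psi\ge 0$ in the maximum case) yields a strictly positive $q$ for $M$, and hence $q_1+q_2>0$ for $X$. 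This is precisely the route the paper takes (``the positivity of the functions follow as before'').
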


Proposition \ref{prop:Qset} remains true in this setup.

\subsection{Iterated function systems} \label{subsect:sandwich}

In this subsection we show that using Alsmeyer's sandwich method \cite{Alsmeyer} our 
results extend naturally to a more general framework. 

The Markov chain $(X_n)_{n \in \N}$ is an \emph{iterated function system of 
iid Lipschitz maps} (IFS) if $X_{n+1} = \Psi(\theta_{n+1}, X_n)$, $ n \in \N$, where
$\theta, \theta_1, \theta_2, \ldots$ are iid random vectors in $\R^d$, $d \geq 1$, the 
initial value $X_0$ is independent of the $\theta$'s, and $\Psi: \R^d \times \R \to \R$ 
is 
a measurable function, which is Lipschitz continuous in the second argument, 
i.e.~for all $\vartheta$ there exists $L_\vartheta > 0$ such that for all $x,y \in \R$
\[
| \Psi(\vartheta, x) - \Psi(\vartheta, y ) | \leq L_\vartheta |x - y|. 
\]
For theory and examples (and for a more general definition) we refer to 
Alsmeyer \cite{Alsmeyer}, Buraczewski, Damek and Mikosch \cite[Section 5]{BDM} and to 
Diaconis and Freedman \cite{DiaconisFreedman}. 


Under general conditions the stationary solution of the IFS exists and satisfies the 
random fixed point equation 
\begin{equation} \label{eq:RFPE}
X \stackrel{\mathcal{D}}{=} \Psi(\theta, X),
\end{equation}
where $\theta$ and $X$ on the right-hand side are independent. Therefore the 
corresponding implicit renewal theorem works and we obtain tail asymptotic for the 
solution $X$. The crucial difficulty here is the same as in the nonarithmetic case (see 
the remark after Theorem 2.3 \cite{Goldie}), namely to determine whether $q$ is nonzero 
or not. For equations (\ref{eq:perpetuity}) and (\ref{eq:maxeq}) there are reasonably 
good sufficient conditions for the strict positivity of the function $q$ (of the 
constant, in the arithmetic case). The main idea in \cite{Alsmeyer} is to 
find lower and upper bound for $\Psi$ such that
\[
A x \vee B = F(\theta, x) \leq \Psi(\theta, x) \leq G(\theta, x)  = A x + B'
\]
holds a.s.~with some (random) $A, B, B'$. Now, if $(A,B)$ and $(A,B')$ satisfies the 
conditions of Theorem \ref{th:max} and \ref{th:grinc}, respectively, then the tail of the 
solution $X$ of (\ref{eq:RFPE}) satisfies (\ref{eq:imp-lim}) with strictly positive $q$.
In particular, Theorems 5.3 and 5.4 in \cite{Alsmeyer} remain true in the arithmetic case.

Finally, we mention that there is no need to restrict ourselves to the finite mean case. 
Assuming (\ref{eq:A-ass2}) or (\ref{eq:A-ass3}) the corresponding version of Theorem 5.3 
and 5.4 in \cite{Alsmeyer} holds. The same results hold in the nonarithmetic case 
treated in \cite{Kevei}.

\section{Proofs} \label{sect:proofs}

First we prove Lemma 1 and Proposition \ref{prop:Qset}, since they are independent of the 
rest of the proofs.

\begin{proof}[Proof of Lemma \ref{lemma:cont-q}]
We show that every sequence $x_n \uparrow \infty$ contains a subsequence $x_{n_k}$ such 
that
\[
\lim_{k \to \infty} x_{n_k}^\kappa q^{-1}(x_{n_k}) \p \{ X > x_{n_k} \} = 1.
\]
This is equivalent to the statement.

Let us write $x_n = z_n e^{\ell_n h}$ with 
\[
z_n = \exp \left( h \left\{ \frac{\log x_n}{h} \right\} \right), \quad
\ell_n = \left\lfloor \frac{\log x_n}{h} \right\rfloor.
\]
Since $z_n \in [1,e^{h})$ by the Bolzano--Weierstrass theorem there is a 
subsequence $n_k$ such that $\lim_{k \to \infty} z_{n_k} = \lambda \in [1,e^{h}]$. 
To ease the notation we write $n$ for $n_k$. For any $\varepsilon > 0$ there is an 
$n_\varepsilon$ such that $| z_n - \lambda | \leq \varepsilon$ for $n \geq 
n_\varepsilon$. Therefore, using also (\ref{eq:imp-lim})
\[
\begin{split}
\limsup_{n \to \infty} x_n^{\kappa} \p \{ X > x_n \} &=
\limsup_{n \to \infty} z_n^{\kappa} e^{\kappa \ell_n h} 
\p \{ X > z_n e^{\ell_n h} \} \\
& \leq 
\limsup_{n \to \infty}
\left( \frac{\lambda+ \varepsilon}{\lambda - \varepsilon} \right)^\kappa
(\lambda - \varepsilon)^\kappa e^{\kappa \ell_n h} 
\p \{ X > (\lambda - \varepsilon) e^{\ell_n h}  \} \\
& = \left( \frac{\lambda+ \varepsilon}{\lambda - \varepsilon} \right)^\kappa
q(\lambda - \varepsilon).
\end{split}
\]
The same argument gives the corresponding lower bound. Since $\varepsilon > 0$ is 
arbitrary we obtain
\begin{equation} \label{eq:q-liminfsup}
q(\lambda + ) \leq \liminf_{n \to \infty} x_n^{\kappa} \p \{ X > x_n \}
\leq \limsup_{n \to \infty} x_n^{\kappa} \p \{ X > x_n \} \leq q(\lambda -).
\end{equation}
Now the continuity of $q$ implies the statement.

Note that (\ref{eq:q-liminfsup}) holds for general $q$. We did not use the continuity, 
only the logarithmic periodicity.
\end{proof}

\begin{proof}[Proof of Proposition \ref{prop:Qset}]
Motivated by the St.~Petersburg example we assume that $h = \log 2$ and $\kappa =1$. 
Moreover, we only prove the statement for the right tail. The general case follows easily 
from this.

Let $H$ be a distribution function, such that $H(1-)=0, H(2-) = 1$. Let the joint 
distribution of $(A,B)$ be the following:
\begin{equation} \label{eq:AB}
\p \{ A= 2^\ell, B= 0 \} = (1-2p) p^{\ell}, \ \ell = 0,1, \ldots, \ 
\p \{ A = 0, B \leq x \} = \frac{p}{1-p} H(x), \ p \in (0,1/2).
\end{equation}

It is easy to check that $(A,B)$ satisfies the conditions of Theorem \ref{th:grinc} 
with $\kappa =1$, $h = \log 2$. Let $(A,B), (A_1, B_1), \ldots$ iid random vectors 
with distribution given in 
(\ref{eq:AB}). Since $A B = 0$ a.s.~the solution of 
the perpetuity equation (\ref{eq:perpetuity}) can be written as
\begin{equation} \label{eq:AB0repr}
X = B_1 + A_1 B_2 + A_1 A_2 B_3 + \ldots =  A_1 A_2 \ldots A_{N-1} B_N,
\end{equation}
where $N = \min\{ i \, : \, A_i = 0 \}$ is a geometric random variable with parameter 
$\p \{ A = 0 \} = p/(1-p)$, i.e.
\[ 
\p \{ N = k \} = \frac{p}{1-p} \left( \frac{1-2p}{1-p} \right)^{k-1},
\quad k = 1,2,\ldots.
\]
From (\ref{eq:AB0repr}) we also see that the solution of (\ref{eq:perpetuity}) and of 
(\ref{eq:maxeq}) are the same.
Given that $N=k$ 
the variables $A_1, \ldots, A_{k-1}, B_k$ are independent, $A_1, \ldots, A_{k-1}$ have 
distribution $\p \{ A = 2^\ell | A \neq 0 \} = (1-p) p^{\ell}$, $\ell=0,1,2, \ldots$, and
$B_k$ has df $H$. To ease the notation we introduce the iid sequence $Y, Y_1, Y_2, 
\ldots$ independent of $(A_i,B_i)_{i \in \N}$, such that 
$\p \{ Y = \ell \} = (1-p) p^\ell$, $\ell=0,1,2, \ldots$, and put
$S_k = Y_1 + \ldots + Y_k$. Let $x > 1$ and write $x = 2^n z$ with 
$n = {\lfloor \log_2 x \rfloor}$, $z =2^{\{ \log_2 x \}}$. Since $B \in [1,2)$ we have 
that
\begin{equation} \label{eq:tail-calc}
\begin{split}
\p \{  X > x \} & = \p \{ A_1 A_2 \ldots A_{N-1} B_N > x \} \\
& = \sum_{k=1}^\infty \p \{ N = k \} \p \{ A_1 A_2 \ldots A_{k-1} B_k > x | N=k \} \\
& = \sum_{k=1}^\infty \p \{ N = k \} \left( 
\p \{ S_{k-1} \geq  n + 1\} + \p \{ S_{k-1} = n \} [1 - H(z)] \right) \\
& = \p \{ S_{N-1} \geq n+1\} + \p \{ S_{N-1} = n \} [1 - H(z)].
\end{split}
\end{equation}
We compute the probabilities $\p \{ S_{N-1} = n\}$. By the independence of $N$ and the 
$Y$'s, after some straightforward calculation one has for $s \in [0,1]$
\[
\E s^{S_{N-1}} = \frac{1}{2(1-p)} + \frac{1-2p}{2(1-p)} \sum_{k=1}^\infty \frac{s^k}{2^k}.
\]
That is 
\[
\p \{ S_{N-1} = k \} =
\begin{cases}
\frac{1}{2(1-p)}, & k=0,\\
\frac{1-2p}{2(1-p)} 2^{-k}, & k=1,2,\ldots.
\end{cases}
\]
Thus $\p \{ S_{N-1} \geq n+1 \} = \frac{1-2p}{2(1-p)} 2^{-n}$, and so continuing 
(\ref{eq:tail-calc}) we have
\begin{equation} \label{eq:tail-calc2}
\p \{ X > x \} = 
x^{-1} \,  \frac{1-2p}{2(1-p)} 2^{\{ \log_2 x \} } [ 2 - H(2^{\{ \log_2 x \}}) ].
\end{equation}
Let us choose now a right-continuous $q \in \mathcal{Q}$ (with the corresponding $\kappa$ 
and $h$) such that $q(2-) \in (0,1)$, otherwise $q$ is arbitrary. Let us choose $p, H$ in 
(\ref{eq:AB}) as 
\[
p = 1 - [2 - q(2-) ]^{-1}, \quad
H(y) = 
\begin{cases}
0, & y < 1, \\
2 - \frac{2(1-p)}{1 - 2 p} \frac{q(y)}{y}, & y \in [1,2), \\
1, & y \geq 1.
\end{cases}
\]
Since $q(y)/y$ is nonincreasing and right-continuous this is a distribution function. 
Substituting this back into (\ref{eq:tail-calc2}) we see that the tail is as stated.

To get rid of the condition $q(2-) \in (0,1)$ one only has to note that if $q(x)$ 
corresponds to $(A,B)$ then $c q(x/c)$ corresponds to $(A, cB)$, $c > 0$. Thus the proof 
is complete.
\end{proof}

In particular, with the choice 
\[
H(y) =
\begin{cases}
0, & y \leq 1, \\
2 - \frac{2}{y}, & y \in [1,2], \\
1, & y \geq 1,
\end{cases}
\]
in (\ref{eq:AB}) we obtain $\p \{ X > x \} = (2 - 1/(1-p)) x^{-1}$, $x > 2$, which is
regularly varying.

\begin{proof}[Proof of Theorem \ref{th:imp}]
We follow Grincevi\v{c}ius \cite[Theorem 2]{Grinc} and Goldie \cite[Theorem 2.3]{Goldie}. 

Introduce the notation
\[
\psi(x) = e^{\kappa x} [ \p \{ X > e^x \} - \p \{ AX > e^x \} ], \quad
f(x) = e^{\kappa x} \p \{ X > e^x\}.
\]
From the definition of $\psi$, using the independence of $X$ and $A$ we easily obtain the 
renewal equation
\begin{equation} \label{eq:renewal}
f(x) = \psi(x) + \E_\kappa f(x - \log A),
\end{equation}
where $\E_\kappa$ stands for the expectation under the measure $\p_\kappa$ defined 
in (\ref{eq:def-pkappa}). (See the proof of Theorem 3.2 in \cite{Goldie}, or the proof 
of 
Theorem 5 in \cite{Kevei}.) Introduce the smoothing of $g$ as
\begin{equation*} 
\widehat g(s) = \int_{-\infty}^s e^{-(s-x)} g(x) \mathrm{d} x. 
\end{equation*}
Applying this transform to both sides of (\ref{eq:renewal})
we get the renewal equation
\begin{equation} \label{eq:ren-smooth}
\widehat f(s) = \widehat \psi(s) + \E_\kappa \widehat f(s - \log A).
\end{equation}
For the solution we have (see again the proof of \cite[Theorem 5]{Kevei})
\begin{equation} \label{eq:smoothrenew}
\widehat f(s) = \int_\R \widehat \psi(s-y) U(\mathrm{d} y), 
\end{equation}
where $U(x) = \sum_{n=0}^\infty F_\kappa^{*n}(x)$ is the renewal function from 
(\ref{eq:def-U}). 

In order to apply the key renewal theorem (Theorem 6.2.6 in \cite{Iksanov}) we have to 
check that $\sum_{j \in \Z} | \widehat \psi(x + j h) | < \infty$ for any $x \in \R$. This 
follows from the direct Riemann integrability of $\widehat \psi$, which is proved in the 
course of the proof of \cite[Theorem 5]{Kevei}. For completeness and since we need the 
same calculation (without $| \cdot |$) we give a proof here. Using Fubini's theorem, 
after some calculation we have for any $x \in \R$
\[
\begin{split}
\sum_{j \in \Z} | \widehat \psi(x + j h) | 
& \leq \sum_{j \in \Z} \int_{-\infty}^\infty I (x + j h \geq y) e^{-(x+jh - y)} 
|\psi(y)| 
\mathrm{d} y \\ 
& = \int_{-\infty}^\infty \frac{1}{1 - e^{-h}} e^{-(x-y) - h \lceil (y-x) / h \rceil} | 
\psi(y) | \mathrm{d} y \\
& \leq \frac{1}{1 - e^{-h}}
\int_{-\infty}^\infty |\psi(y) | \mathrm{d} y < \infty.
\end{split}
\]
Therefore we may apply the 
key renewal theorem and we get
\begin{equation} \label{eq:lim-smooth}
\lim_{n \to \infty} \widehat f(s + nh) = C(s). 
\end{equation}
where, using the same calculation as above
\begin{equation} \label{eq:def-C}
\begin{split}
C(s) & = \frac{h}{\mu} \sum_{j \in \Z} \widehat \psi(s + j h) \\
& = \frac{h}{\mu} \frac{1}{1 - e^{-h}}
\int_{-\infty}^\infty  e^{-(s-y) - h \lceil (y-s) / h \rceil} \psi(y) \mathrm{d} y \\
&= \frac{h}{\mu} \frac{1}{1 - e^{-h}}
\int_{-\infty}^\infty  e^{-h \{ (s-y)/ h \} } \psi(y) \mathrm{d} y ,
\end{split}
\end{equation}
with $\mu = \E_\kappa \log A = \E A^{\kappa} \log A < \infty$.

We `unsmooth' (\ref{eq:lim-smooth}) the same way as in \cite{Grinc}. Using the definition 
of $\widehat f$, multiplying by $e^s$ we obtain from (\ref{eq:lim-smooth}) that for any 
$0 
< s_1 \leq s_2$
\[
\lim_{n \to \infty} e^{-nh} \int_{e^{s_1} e^{nh}}^{e^{s_2} e^{nh}} u^\kappa \p \{ X > u\} 
\mathrm{d} u = e^{s_2} C(s_2) - e^{s_1} C(s_1).
\]
Changing variables this reads
\begin{equation} \label{eq:q-det}
\lim_{n \to \infty} \int_{e^{s_1}}^{e^{s_2}}
(y e^{nh})^\kappa \p \{ X > y e^{nh} \} \mathrm{d} y = e^{s_2} C(s_2) - e^{s_1} C(s_1). 
\end{equation}
Since this holds for any $s_1 \leq s_2$ we readily obtain that the integrand remains 
bounded, therefore there exists a subsequence $n_k \uparrow \infty$ and a function $q$ 
such that $(y e^{n_k h})^\kappa \p \{ X > y e^{n_k h} \} \to q(y)$ for any $y \in C_q$. 
As a limit of nonincreasing functions $q(y) y^{-\kappa}$ is nonincreasing. Moreover, from 
(\ref{eq:q-det}) we see that
\begin{equation} \label{eq:q-int}
\int_{e^{s_1}}^{e^{s_2}} q(y) \mathrm{d} y = e^{s_2} C(s_2) - e^{s_1} C(s_1), 
\end{equation}
which determines $q$ uniquely at its continuity points. This implies that
$(y e^{n h})^\kappa \p \{ X > y e^{n h} \} \to q(y)$ holds true for the whole sequence of 
natural numbers whenever $y \in C_q$. From the latter we obtain the multiplicative 
periodicity $q(e^h y) = q(y)$. Since $y^{-\kappa} 
q(y)$ is nonincreasing $C_q$ is at most countable. Thus the first statement is 
completely proved.

Assume now that $\sum_{j \in \Z} | \psi(x + jh) | < \infty$ for any $x \in \R$. Then 
there is no need for the smoothing. Indeed, we may apply the key renewal theorem directly 
for the equation (\ref{eq:renewal}) and we obtain
\[
\lim_{n \to \infty} x^\kappa e^{\kappa nh} \p \{ X > x e^{nh} \} = q(x), 
\]
which is exactly the statement. The stated properties of $q$ follow easily. In fact
\begin{equation} \label{eq:q-form}
q(x) = \frac{h}{\mu} \sum_{j \in \Z}  \psi(\log x + jh).
\end{equation}
\end{proof}

\begin{remark} \label{rem:positivity}
Note that (\ref{eq:q-int}) implies $q(v) = (v C(\log v))'$ Lebesgue almost everywhere, 
from which, under some extra assumptions, some calculations give (\ref{eq:q-form}). 
Also note that $q(x) \equiv 0$ if and only if $v C(\log v)$ is constant. Since,
\[
v C( \log v) = \frac{h}{\mu (1 - e^{-h})} 
\int_{-\infty}^\infty e^{h \lfloor (\log v - y ) / h \rfloor} e^y \psi(y) \mathrm{d} y ,
\]
we see that if $\psi$ is nonnegative then $q(x) \equiv 0$ if and only if $\psi(y) \equiv 
0$. This readily implies the positivity of the function $q$  when $B \geq 0$ a.s.~and
$\p \{ B > 0 \} > 0$ in case of both the perpetuity equation (\ref{eq:perpetuity}) and  
the maximum equation (\ref{eq:maxeq}).
\end{remark}

\begin{proof}[Proof of Theorem \ref{th:grinc}]
We only have to show that $q_1(x) + q_2(x) > 0$. Goldie's argument \cite[p.~157]{Goldie} 
shows that it is enough to prove the positivity of the function for the maximum of the 
corresponding random walk. This was shown in \cite[Theorem 1.3.8]{Iksanov}.
\end{proof}

\begin{proof}[Proof of Theorem \ref{th:imp2}]
Recall the notations from the proof of Theorem \ref{th:imp}. Exactly the same way as in 
the previous proof we obtain the renewal equation (\ref{eq:ren-smooth}), which has a 
unique bounded solution (\ref{eq:smoothrenew}).
We want to apply the key renewal theorem in the infinite mean case. In order to do so, we 
first have to prove such a result.

The following simple lemma is the arithmetic analogue of \cite[Theorem 3]{Erickson}, 
\cite[Proposition 6.4.2]{Iksanov2}, \cite[Lemma 1]{Kevei}. We note that the statement 
holds under less restrictive condition on the left tail, see \cite[Proposition 
6.4.2]{Iksanov2}. However, for our purposes this weaker version is sufficient.

\begin{lemma} \label{lemma:keyren}
Assume (\ref{eq:SRT}) and (\ref{eq:A-ass2}). 
Let $z$ be a function such that $\sum_{j \in \Z} |z(x + jh)| < \infty$ for any $x \in \R$ 
and  $z(x) = O(x^{-1})$. Then
\[
\lim_{ n \to \infty} 
m(nh) \int_\R z(x+ nh -y) U(\mathrm{d} y) = h C_\alpha \sum_{j \in \Z} z(x + jh).
\]
\end{lemma}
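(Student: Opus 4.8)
The plan is to reduce the statement to the strong renewal theorem \eqref{eq:SRT} via a discretization argument. First I would decompose the integral $\int_\R z(x + nh - y)\, U(\mathrm d y)$ according to the atoms of $U$ on the lattice. Since $\log A$ is arithmetic with span $h$ under $\p_\kappa$ as well (the tilting by $A^\kappa$ does not change the lattice structure), the renewal measure $U$ is supported on $h\Z$, so writing $u_k = U(kh) - U(kh-)$ gives
\[
m(nh) \int_\R z(x + nh - y)\, U(\mathrm d y)
= m(nh) \sum_{k \in \Z} z(x + (n-k)h)\, u_k
= \sum_{j \in \Z} z(x + jh) \cdot \big[ m(nh)\, u_{n-j} \big].
\]
By \eqref{eq:SRT} and the fact that $m$ is regularly varying (with index $1-\alpha \in [0,1)$), for each fixed $j$ we have $m(nh)\, u_{n-j} = u_{n-j} m((n-j)h) \cdot \frac{m(nh)}{m((n-j)h)} \to h C_\alpha \cdot 1$ as $n \to \infty$. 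So the strategy is: pass the limit inside the sum over $j$, using the absolute summability hypothesis $\sum_j |z(x+jh)| < \infty$ to justify it via dominated convergence, once a uniform (in $n$, $j$) domination $m(nh)\, u_{n-j} \le M$ is available.

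The main obstacle — and the only place where the growth condition $z(x) = O(x^{-1})$ is used — is precisely obtaining that uniform bound and handling the terms with $j$ close to $n$ (equivalently, the renewal epochs $u_\ell$ with small index $\ell$) and with $j$ very negative (large index $n - j$). For the bulk of the sum, $\sup_{\ell \ge \ell_0} u_\ell m(\ell h) \le M$ holds for $\ell_0$ large by \eqref{eq:SRT}; for the finitely many small indices $\ell < \ell_0$ one uses that $m(nh) \to \infty$ (when $\alpha < 1$) or is slowly varying (when $\alpha = 1$) together with $u_\ell \le U(\ell_0 h) < \infty$, so these contribute $O(m(nh)) \cdot z(x + (n - \ell)h) = O(m(nh)) \cdot O((nh)^{-1}) \to 0$ — this is where $z(x) = O(x^{-1})$ is essential, since $m(nh)/n \to 0$. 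Symmetrically, for $j \to -\infty$ the index $n - j$ is large, $u_{n-j} m((n-j)h)$ is bounded, and $m(nh)/m((n-j)h) \to 0$ (for $\alpha < 1$) or is bounded (for $\alpha = 1$), so by uniform integrability against the summable sequence $(|z(x+jh)|)_j$ the tail contribution vanishes. I would organize this as: (i) fix $\varepsilon > 0$ and choose a finite window $|j| \le J$ capturing all but $\varepsilon$ of $\sum_j |z(x+jh)|$; (ii) on this window apply \eqref{eq:SRT} termwise plus $m(nh)/m((n-j)h) \to 1$; (iii) bound the $|j| > J$, $n - j \ge \ell_0$ part by $M \sup_{n} \frac{m(nh)}{m((n-j)h)} \cdot \varepsilon \lesssim \varepsilon$ using regular variation (Potter bounds give $m(nh)/m((n-j)h) \le C$ uniformly once $n - j \ge \ell_0$, for $\alpha \le 1$); (iv) bound the finitely many $n - j < \ell_0$ terms using $z = O(x^{-1})$ as above.

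A technical point to record is the uniformity in step (iii): Potter's bounds for the regularly varying function $m$ of index $1-\alpha \ge 0$ give, for any $n \ge j + \ell_0$, $m(nh)/m((n-j)h) \le C \max\{1, (n/(n-j))^{1-\alpha+\delta}\}$, and since $j$ can be negative (so $n - j > n$) or positive but bounded by $n - \ell_0$, this ratio stays bounded by a constant depending only on $\ell_0$; combined with $\sup_{\ell \ge \ell_0} u_\ell m(\ell h) \le M$ this yields $m(nh) u_{n-j} \le MC$ for all such $j$, which is exactly the domination needed for dominated convergence on the window and for the tail estimate. Assembling (i)–(iv) and letting $\varepsilon \to 0$ gives $\lim_n m(nh)\int_\R z(x + nh - y) U(\mathrm d y) = h C_\alpha \sum_{j \in \Z} z(x + jh)$, which is the claim.
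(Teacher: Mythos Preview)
Your overall strategy --- discretize to the lattice, split the sum $\sum_j z(x+jh)\,m(nh)\,u_{n-j}$ into regions, and combine the strong renewal theorem with Potter bounds and the decay $z(x)=O(x^{-1})$ --- is exactly the paper's approach. However, step~(iii) contains a genuine gap. You claim that Potter bounds give $m(nh)/m((n-j)h)\le C$ uniformly once $n-j\ge\ell_0$, with $C$ depending only on $\ell_0$. This is false: Potter's theorem only controls the ratio by $A\,(n/(n-j))^{1-\alpha+\delta}$ (once both arguments exceed some threshold), and when $j=n-\ell_0$ the ratio $n/(n-j)=n/\ell_0\to\infty$. Since $1-\alpha\ge 0$ and $m(nh)\to\infty$, the product $m(nh)\,u_{n-j}$ is \emph{not} uniformly bounded over the full range $n-j\ge\ell_0$, and the domination you invoke does not hold. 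Your step~(iv) correctly handles the indices $n-j<\ell_0$, but that covers only finitely many $\ell$'s; the problematic strip is $\ell_0\le n-j\le \delta n$ for small $\delta$, which falls in your region~(iii) yet escapes both your Potter bound and your step~(iv).

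The paper repairs this by splitting at a \emph{moving} threshold $(1-\delta)n$ rather than a fixed $\ell_0$. For $1\le k\le(1-\delta)n$ (i.e.\ $n-k\ge\delta n$) Potter bounds legitimately give $m(nh)/m((n-k)h)\le 2\delta^{-1}$ uniformly, since now both arguments are at least $\delta nh\to\infty$; dominated convergence then applies on this range. The remaining strip $(1-\delta)n\le k\le n$ is estimated differently: here $|z(x+kh)|=O(1/(nh))$ by the growth hypothesis, and the total renewal mass $\sum_{\ell=0}^{\delta n}u_\ell\le U(\delta nh)$ is controlled via the asymptotic $U(y)\sim c\,y/m(y)$, which yields a contribution of order $\delta^\alpha$. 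Letting $\delta\downarrow 0$ then finishes the argument. The missing ingredient in your outline is precisely this use of the global renewal asymptotic $U(\delta nh)\sim c\,\delta nh/m(\delta nh)$ to absorb the range where Potter bounds blow up.
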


\begin{proof}
We have
\[
\begin{split}
\int_\R z(x + nh -y) U(\mathrm{d} y) & 
= \sum_{j \in \Z} z(x + nh - jh) u_j = \sum_{k \in \Z } z(x + kh) u_{n-k} \\
& = \Big( \sum_{k \leq 0} + \sum_{1 \leq k \leq n} + \sum_{k > n} \Big) z(x + kh) u_{n-k}
= I_1 + I_2 + I_3.
\end{split}
\]
Recall that $m$ in (\ref{eq:def-m}) is regularly varying with parameter $1-\alpha$ and 
nondecreasing.
For $I_1$
\[
m(nh) I_1 = \sum_{k \leq 0}  z(x + kh) m((n-k)h) u_{n-k} \frac{m(nh)}{m((n-k)h)}
\to h C_\alpha  \sum_{k \leq 0}  z(x + kh),
\]
since the summands converge and $m(nh) / m((n-k)h) \leq 1$, thus Lebesgue's dominated 
convergence theorem applies. 
To handle $I_2$ let $1 > \delta > 0$ arbitrarily small. Then, from Potter bounds 
\cite[Theorem 1.5.6]{BGT} we obtain $\frac{m(nh)}{m((n-k)h)} \leq 2 \delta^{-1}$ for $n$ 
large enough and $k \leq (1-\delta) n$, thus by Lebesgue's dominated convergence theorem  
\[
\sum_{k =1}^{(1-\delta)n}  z(x + kh) m((n-k)h) u_{n-k} \frac{m(nh)}{m((n-k)h)}
\to h C_\alpha  \sum_{k \geq 1}  z(x + kh) \quad \text{as } n \to \infty. 
\]
Furthermore, noting that $U(y) \sim \sin (\pi \alpha) / (\pi \alpha) \ y^\alpha / 
\ell(y)$ as $y \to \infty$, for some $c> 0$ we have
\[
\sum_{k =(1-\delta)n}^n  |z(x + kh)| m(nh) u_{n-k} \leq \sup_{y > 0} y|z(y)|
\frac{m(nh)}{nh} U(\delta n h) \leq c \delta^\alpha. 
\]
Since $\delta > 0$ is arbitrarily small, we obtain
\[
\lim_{n \to \infty} m(nh) I_2   = h C_\alpha  \sum_{k \geq 1}  z(x + kh).
\]
Finally, for $I_3$
\[
m(nh) \sum_{k > n} |z(x + kh)| u_{n-k} \leq \sup_{y > 0} y |z(y)| \, U(0) 
\frac{m(nh)}{nh} 
\to 0.
\]
\end{proof}

In the proof of Theorem 5 \cite{Kevei} it is shown that under our conditions 
\begin{equation} \label{eq:psi-est}
\widehat \psi(s) = O(e^{-\delta s}) \quad \text{as } s \to \infty,
\end{equation}
therefore the condition of Lemma \ref{lemma:keyren} is satisfied, from which
\begin{equation} \label{eq:smooth-lim-inf}
\lim_{n \to \infty} m(nh) \widehat f(s + nh) = C(s) :=
h C_\alpha \sum_{j \in \Z} \widehat \psi(s + jh). 
\end{equation}
with the same $C$ as in (\ref{eq:def-C}).

Using the definition of $\widehat f$, multiplying by $e^s$ we obtain from 
(\ref{eq:smooth-lim-inf}) that for any $0 < s_1 \leq s_2$
\[
\lim_{n \to \infty} m(nh) e^{-nh} 
\int_{e^{s_1} e^{nh}}^{e^{s_2} e^{nh}} u^\kappa \p \{ X > u\} \mathrm{d} u = e^{s_2} 
C(s_2) - 
e^{s_1} C(s_1).
\]
Changing variables this reads
\begin{equation*} \label{eq:q-det2}
\lim_{n \to \infty} \int_{e^{s_1}}^{e^{s_2}} m(nh)
(y e^{nh})^\kappa \p \{ X > y e^{nh} \} \mathrm{d} y = e^{s_2} C(s_2) - e^{s_1} C(s_1). 
\end{equation*}
As in the previous proof this implies that
$ m(nh) (y e^{n h})^\kappa \p \{ X > y e^{n h} \} \to q(y)$ holds true for the whole 
sequence of natural numbers whenever $y \in C_q$ with some $q$, which satisfies the 
stated properties.
\end{proof}

\begin{proof}[Proof of Theorems \ref{th:max2} and \ref{th:perp2}]
We only have to prove that the assumptions imply the integrability condition in Theorem 
\ref{th:imp2}. This is done in the proof of Theorem 1 and 2 in \cite{Kevei}. 

Remark \ref{rem:positivity} implies $q(x) > 0$ in Theorem \ref{th:max2}.
Now, the strict positivity of  $q_1(x) + q_2(x)$ follows again from Goldie's 
argument \cite[p.157]{Goldie} and from the just proved positivity of $q$ in Theorem 
\ref{th:max2}.
\end{proof}

Before the proof of Theorem \ref{th:imp3} we need a key renewal theorem in the arithmetic 
case for defective distribution functions. The following statement is an extension to 
the arithmetic case of Theorem 5(i) \cite{AFK}. Recall $p_n$ from Theorem \ref{th:imp3}.

\begin{lemma} \label{lemma:keyren2}
Assume (\ref{eq:A-ass3}), $\sum_{j \in \Z} |z(x + jh) | < \infty$ for any $x \in \R$ and 
$\sup_{x \in [0,h]} z(x + nh) = o(p_n)$ as $n \to \infty$. Let $U(x) = \sum_{n=0}^\infty 
(\theta F_\kappa)^{*n}(x)$. Then
\[
\lim_{n \to \infty} p_n^{-1} \int_\R z(x + nh -y) U(\mathrm{d} y) = 
\frac{\theta  }{(1 - \theta)^2} \sum_{j \in \Z} z(x + jh).
\]
\end{lemma}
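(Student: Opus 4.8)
Since $\log A$ conditioned on $A\neq 0$ is arithmetic with span $h$ and the atom $\{A=0\}$ carries no $\p_\kappa$‑mass, the laws $F_\kappa$, $\theta F_\kappa$, and hence $U=\sum_{n\ge 0}(\theta F_\kappa)^{*n}$ are all concentrated on $h\Z$; write $u_k:=U(\{kh\})$. Note $U(\infty)=\sum_{n\ge 0}\theta^n=(1-\theta)^{-1}$, so $(u_k)$ is summable. Fixing $x\in\R$, write $x=x_0+\ell_0 h$ with $x_0\in[0,h)$ and set $a_m:=z(x+mh)$; then $\int_\R z(x+nh-y)\,U(\mathrm dy)=\sum_{m\in\Z}a_m u_{n-m}=:(a*u)_n$. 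The hypotheses give $\sum_{m\in\Z}|a_m|<\infty$, and since $\sup_{t\in[0,h]}|z(t+nh)|=o(p_n)$ and $p_{m+\ell_0}\sim p_m$ we get $a_m=o(p_m)$ as $m\to+\infty$; combined with the growth condition $\sup_{k\ge n}p_k=O(p_n)$ this upgrades to $\sup_{m\ge N}|a_m|=o(p_N)$. The plan is: (i) use $h$‑subexponentiality to show that the renewal atoms satisfy $u_k\sim\frac{\theta}{(1-\theta)^2}p_k$ and $u_k=O(p_k)$; (ii) transfer this to $(a*u)_n$ by a lattice version of the subexponential convolution estimate underlying Theorem~5 of \cite{AFK}.

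For step (i) I would invoke two standard facts about $h$‑subexponential sequences, which hold under (\ref{eq:A-ass3}) by \cite{AFK}, \cite[Section~4.7]{FKZ} and the extension to exponentially small left tails in \cite[Appendix]{Kevei}: the iterated tail equivalence $p^{*j}_n\sim j\,p_n$ for every $j\ge 1$, and Kesten's bound $p^{*j}_n\le C_\varepsilon(1+\varepsilon)^j p_n$. Since $u_k=\sum_{j\ge 0}\theta^j p^{*j}_k$, picking $\varepsilon$ with $(1+\varepsilon)\theta<1$ and applying dominated convergence in $j$ (the $j=0$ term being eventually $0$, the others dominated by $C_\varepsilon((1+\varepsilon)\theta)^j$) gives
\[
\frac{u_k}{p_k}\ \longrightarrow\ \sum_{j\ge 0}j\,\theta^j=\frac{\theta}{(1-\theta)^2},\qquad u_k\le C'p_k\ \ (k\in\Z).
\]

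Step (ii) rests on the convolution lemma: if $(b_m)$ satisfies $\sum_m|b_m|<\infty$ and $\sup_{m\ge N}|b_m|=o(p_N)$, and $(c_k)$ satisfies $c_k\sim\gamma p_k$ and $|c_k|\le Cp_k$ for $(p_k)$ the atoms of an $h$‑subexponential law obeying the growth condition, then $(b*c)_n\sim\gamma\bigl(\sum_m b_m\bigr)p_n$. One splits $\sum_m b_m c_{n-m}=\bigl(\sum_{m\le K}+\sum_{K<m\le n-K}+\sum_{m>n-K}\bigr)$. In the first block, for each fixed $m$ one has $c_{n-m}\sim\gamma p_{n-m}\sim\gamma p_n$, and dominated convergence (using $p_{n-m}/p_n=O(1)$ uniformly for $m\le 0$ by the growth condition, the index set $0<m\le K$ being finite) yields $p_n^{-1}\sum_{m\le K}b_m c_{n-m}\to\gamma\sum_{m\le K}b_m$. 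In the middle block, with $\eta_K:=\sup_{m>K}|b_m|/p_m\to 0$, one bounds $|b_m c_{n-m}|\le C\eta_K p_m p_{n-m}$, so the block is $\le C\eta_K p^{*2}_n\le 3C\eta_K p_n$ for large $n$ (using $p^{*2}_n\sim 2p_n$). In the last block, $|b_m|\le\sup_{\ell\ge n-K+1}|b_\ell|=o(p_{n-K+1})=o(p_n)$ while $\sum_{m>n-K}|c_{n-m}|=\sum_{k<K}|c_k|\le\|c\|_1<\infty$, so the block is $o(p_n)$. Letting $n\to\infty$ and then $K\to\infty$ proves the lemma. Applying it with $b=a$, $c=u$, $\gamma=\theta/(1-\theta)^2$, and noting $\sum_m a_m=\sum_{j\in\Z}z(x+jh)$, gives $p_n^{-1}(a*u)_n\to\frac{\theta}{(1-\theta)^2}\sum_{j\in\Z}z(x+jh)$, as asserted.

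The main obstacle is the last block of the convolution lemma: the crude estimate $\sum_{m>n-K}|b_m c_{n-m}|\le \bigl(\sup_{m\ge n-K+1}|b_m|\bigr)\|c\|_1$ only gives $o(1)$, which is useless since $p_n\to 0$; it is precisely the strengthened decay $\sup_{t\in[0,h]}|z(t+nh)|=o(p_n)$ together with the stability $p_{n-K+1}\sim p_n$ (itself a consequence of $p_{n+1}\sim p_n$, with the growth condition letting one pass from ``$|b_m|=o(p_m)$'' to ``$\sup_{m\ge N}|b_m|=o(p_N)$'') that turns this into the required $o(p_n)$. The remaining inputs — the iterate $p^{*j}_n\sim j p_n$, Kesten's bound, and the middle‑block control via $p^{*2}_n\sim 2p_n$ — are exactly what $F_\kappa\in\mathcal S_h$ plus the growth assumption in (\ref{eq:A-ass3}) provide.
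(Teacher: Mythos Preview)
Your argument is correct and follows essentially the same route as the paper's proof: both reduce to the lattice asymptotic $u_k\sim\frac{\theta}{(1-\theta)^2}p_k$ (the paper cites Proposition~12 of \cite{AFK}, while you re-derive it from Kesten's bound plus $p^{*j}_n\sim j\,p_n$ via dominated convergence), and then split the convolution $\sum_m a_m u_{n-m}$ into a main block and negligible remainders controlled by $p^{*2}_n\sim 2p_n$, the growth condition, and $a_m=o(p_m)$. The only cosmetic difference is that the paper uses a growing central window $|\ell|\le\ell_n$ (with $\ell_n\to\infty$ chosen so that $u_{n+\ell}/u_n\to 1$ uniformly over the window) whereas you use a fixed cutoff $K$ followed by $K\to\infty$; these are interchangeable standard devices. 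One small point to clean up: your stated bound ``$u_k\le C'p_k\ (k\in\Z)$'' is false as written (e.g.\ $u_0\ge 1$ from the $j=0$ term, while $p_0$ may vanish), but you only ever use it for $k\ge K$ with $K$ large, where it follows immediately from $u_k\sim\gamma p_k$; just restrict the claim to large $k$.
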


\begin{proof}
Note that Proposition 12 \cite{AFK} remains true in our case. Therefore
\begin{equation} \label{eq:u-asympt}
u_n = U(nh ) - U(nh-) \sim \frac{\theta}{(1 - \theta)^2} [F_\kappa(nh) - F_\kappa(nh-)]
= \frac{\theta}{(1 - \theta)^2} p_n.
\end{equation}
Since $\lim_{n \to \infty} p_n / p_{n+1} = 1$, there is sequence $\ell_n < n/2$ tending 
to infinity such that 
\[
\lim_{n \to \infty} \max_{|\ell| \leq \ell_n} u_n / u_{n+\ell} =1.
\]
Therefore
\[
\sum_{ |\ell| \leq \ell_n } z(x + \ell h ) u_{n-\ell} \sim u_n \sum_{\ell \in \Z} z(x + 
\ell h) \sim  \frac{\theta}{(1 - \theta)^2} p_n \sum_{\ell \in \Z} z(x + \ell h).
\]
Thus we only have to show that the remaining terms are $o(p_n)$.
For $\ell \leq - \ell_n$ using that $\max_{k \geq n } p_k = O(p_n)$ we obtain
\[
\sum_{ \ell \leq -\ell_n } z(x + \ell h ) u_{n-\ell}  = O(p_n) o(1).
\]
Using $z(x + n h) = o(p_n)$, (\ref{eq:u-asympt}) and Proposition 2 in \cite{AFK}
\[
\sum_{\ell = \ell_n}^{n-\ell_n} z(x + \ell h) u_{n-\ell}
= o(1) \sum_{\ell = \ell_n}^{n-\ell_n} p_\ell p_{n-\ell} = o(p_n).
\]
Finally, $z(x + n h) = o(p_n)$ and $\max_{k \geq n } p_k = O(p_n)$ imply
\[
\sum_{\ell> n-\ell_n} z(x + \ell h) u_{n-\ell} = o(p_n),
\]
and the proof is complete.
\end{proof}

\begin{proof}[Proof of Theorem \ref{th:imp3}]
Following the same steps as in the proof of Theorem \ref{th:imp} we obtain
\[
\widehat f(s) = \int_\R \widehat \psi(s-y) U(\mathrm{d} y), 
\]
with the defective renewal function $U(x) = \sum_{n=0}^\infty (\theta F_\kappa)^{*n}(x)$. 
Since $\theta < 1$ we have $U(\R) = (1 -\theta)^{-1} < \infty$. 

As in (\ref{eq:psi-est}) we have $\widehat \psi(x) = O(e^{-\delta x})$ for some 
$\delta > 0$. The subexponentiality of $F_\kappa$ implies that
$\sup_{x \in [0,h]} \widehat \psi(x + nh) = o(p_n)$. That is, the condition of Lemma 
\ref{lemma:keyren2} holds, and we obtain the asymptotic
\[
\lim_{n \to \infty} p_n^{-1} \widehat f(s + nh) =  
\frac{\theta}{(1 - \theta)^2} \sum_{j \in \Z} \widehat \psi(s + jh).
\]
The proof can be finished  in exactly the same way as in Theorem \ref{th:imp2}.
\end{proof}

\begin{proof}[Proof of Theorems \ref{th:max3} and \ref{th:perp3}]
Again, the integrability condition in Theorem \ref{th:imp3} follows from the proof of  
Theorem 3 and 4 in \cite{Kevei}. The positivity of the functions follow as before.
\end{proof}

\medskip

\noindent
\textbf{Acknowledgement.} I thank Alexander Iksanov for suggesting the problem, and for 
his comments, which greatly improved the paper. This research was funded by a postdoctoral 
fellowship of the Alexander von Humboldt Foundation.

{\small
\def\cprime{$'$}

}


\end{document}